\newcommand{\guio}[1]{\nobreakdash-\hspace{0pt}}
\numberwithin{equation}{section}
\newtheorem{theorem}{Theorem}[section]
\newtheorem{lemma}[theorem]{Lemma}
\newtheorem{corollary}[theorem]{Corollary}
\theoremstyle{definition}
\newtheorem{remark}[theorem]{Remark}
\newcommand{\R}{\mathbb{R}}
\newcommand{\C}{\mathbb{C}}
\newcommand{\re}{\operatorname{Re}}
\newcommand{\im}{\operatorname{Im}}
\newcommand{\pv}{\operatorname{p.\!v.}}
\newcommand{\arctanh}{\operatorname{arctanh}}
\newcommand{\ep}{\varepsilon}
\newcommand{\supp}{\operatorname{supp}}
\title[Stability of ellipsoids as minimisers of perturbed Coulomb energies]
{Stability of ellipsoids as the energy minimisers \\of perturbed Coulomb energies}
\author[J.~Mateu]{J.~Mateu}
\author[M.G.~Mora]{M.G.~Mora}
\author[L.~Rondi]{L.~Rondi}
\author[L.~Scardia]{L.~Scardia}
\author[J.~Verdera]{J.~Verdera}
\address[J. Mateu]{Department de Matem\`atiques, Universitat Aut\`onoma de Barcelona, and Centre de Recerca Matem\`atica, Catalonia}
\email{mateu@mat.uab.cat}
\address[M.G. Mora]{Dipartimento di Matematica, Universit\`a di Pavia, Italy}
\email{mariagiovanna.mora@unipv.it}
\address[L. Rondi]{Dipartimento di Matematica, Universit\`a di Pavia, Italy}
\email{luca.rondi@unipv.it}
\address[L. Scardia]{Department of Mathematics, Heriot-Watt University, United Kingdom}
\email{L.Scardia@hw.ac.uk}
\address[J. Verdera]{Department de Matem\`atiques, Universitat Aut\`onoma de Barcelona, and Centre de Recerca Matem\`atica, Catalonia}
\email{jvm@mat.uab.cat}
\date{}
\begin{document}

\begin{abstract} 
In this paper we characterise the minimiser for a class of nonlocal perturbations of the Coulomb energy. We show that the minimiser is the normalised characteristic function of an ellipsoid, under the assumption that the perturbation kernel has the same homogeneity as the Coulomb potential, is even, smooth off the origin and sufficiently small. This result can be seen as the stability of ellipsoids as energy minimisers, since the minimiser of the Coulomb energy is the normalised characteristic function of a ball. 
\end{abstract}

\maketitle

\section{Introduction and statement of the main result}
Nonlocal energies are an approximation of discrete energies modelling long-range particle interactions, for large numbers of particles. The study of the minimisers of nonlocal energies -- existence, uniqueness, regularity and characterisation -- is therefore a crucial step for understanding optimal arrangements of particles, at least in average. 

In this paper we characterise the minimisers for a class of nonlocal energies that are perturbations of the Coulomb energy. We focus here on the two-dimensional case to illustrate the main result and the key ideas of our approach.

We consider energy functionals $I^\kappa$ defined on probability measures $\mu \in \mathcal{P}(\R^2)$ as
\begin{equation}\label{en0}
I^\kappa(\mu)=\int_{\R^2}\int_{\R^2} W^\kappa(z-w)\, d\mu(z) d\mu(w) + \int_{\R^2} |z|^2 \,d\mu(z),
\end{equation}
where the interaction potential $W^\kappa$ is given (in complex variables) by
\begin{equation}\label{intpot}
W^\kappa(z) = -\log|z|+\kappa(z), \quad z \in \C, \quad z\neq0,
\end{equation}
and $W^{\kappa}(0)=+\infty$, and $\kappa$ is an even real-valued function, homogeneous of degree $0$ and of class $C^3(\C\setminus\{0\}).$ 

The unperturbed energy $I^0$, defined in \eqref{en0}, for $\kappa=0$, is perhaps the most well-studied nonlocal energy, due to its relevance in a variety of contexts, from random matrices to interpolation theory and materials science. 
The minimiser of $I^0$ is well-known, and is the normalised characteristic function of the unit disc, the so-called circle law (see \cite{Fro}, \cite{ST}, and the references therein). 

The main result of this paper is that the perturbed energy $I^\kappa$ also has a unique minimiser, which is the normalised characteristic function of an ellipse, provided the kernel $\kappa$ is small in some suitable norm. This can be seen as a `stability' result for ellipses, showing the `persistence' of the ellipse as the energy minimiser, for small perturbations of the energy.

\subsection{Motivation} In the recent work \cite{CMMRSV, MMRSV20, MMRSVIzv} we considered the one-parameter family of energies 
\begin{equation}\label{enal}
I_\alpha(\mu)=\int_{\R^2}\int_{\R^2} W_\alpha(z-w)\, d\mu(z) d\mu(w) + \int_{\R^2} |z|^2 \,d\mu(z),
\end{equation}
defined on $\mu \in \mathcal{P}(\R^2)$, where the interaction kernel  $W_\alpha$ is given by 
\begin{equation*}
W_\alpha(z) = -\log|z|+\alpha \frac{x^2}{|z|^2}, \quad z=x+i y \in \C, \quad z\neq0, \quad -1< \alpha<1.
\end{equation*}
The energy $I_\alpha$ arises in the study of defects in metals, dislocations, in the limit case $\alpha=1$ (see \cite{MRS}). 
In \cite{CMMRSV} we showed that the minimiser of $I_\alpha$ is the normalised characteristic function of the region encircled by an ellipse with semi-axes $\sqrt{1-\alpha}$ and $\sqrt{1+\alpha}$ for every $-1<\alpha<1$ (see also \cite{CMMRSV2} for a higher-dimensional version of the result).

For $\alpha$ small, we can interpret the energy $I_\alpha$ in \eqref{enal} as a `perturbation' of the Coulomb energy $I^0$. Hence the minimality of the ellipse for $I_\alpha$, for $\alpha$ small, shows the persistence of the ellipse as energy minimiser when the logarithmic potential is perturbed by $\alpha x^2/|z|^2$. 

A natural question is then what is special about the potential $\alpha x^2/|z|^2$, and whether we can {\em reproduce the persistence of the ellipse} for more general perturbations $\kappa$ of the logarithmic potential. This is one of the main motivations of this work. In Theorem~\ref{te} we identify the properties of the perturbation potential $\kappa$ that guarantee the persistence of the ellipse: if $\kappa$ is even, zero-homogeneous, and smooth outside the origin, then the minimiser of the corresponding energy is still an ellipse, at least if $\kappa$ is sufficiently small.

Another motivation for our study comes from applications in materials science, where kernels of the form \eqref{intpot} arise in the study of dislocations in anisotropic elastic media. For instance, the interaction of screw dislocations in a planar anisotropic elastic body is described in terms of the kernel
$$
-\frac12\log \left(\alpha x^2-2\beta xy+\gamma y^2\right), \quad z=x+i y \in \C, \quad z\neq0,
$$
where $\alpha,\beta,\gamma$ are given constants such that $\alpha>0$ and $\alpha\gamma-\beta^2>0$. This kernel can be written in the form \eqref{intpot} by considering
$$
\kappa(z)=-\frac12\log \left(\frac{\alpha x^2-2\beta xy+\gamma y^2}{|z|^2}\right).
$$
Similarly, the interaction of edge dislocations in a planar anisotropic elastic body involves a kernel of the form \eqref{intpot} with
$$
\kappa(z)=-\frac14\log \left(\frac{x^4+(2\alpha+\beta)x^2y^2+\alpha^2y^4}{|z|^4}\right)
-\frac{\sqrt{4\alpha+\beta}}{2\sqrt\beta}\arctanh\left(\frac{2x^2+(2\alpha+\beta)y^2}{y^2\sqrt{4\alpha\beta+\beta^2}}\right),
$$
where $\alpha,\beta$ are given constants such that $\beta>0$ and $4\alpha+\beta>0$ (see, e.g., \cite[Chapter~13]{HL}).

\subsection{Main result.}
Before stating our main result we fix some notation. Given positive real numbers $a$ and $b$ and an angle $\varphi\in [0,2\pi)$, we let $E(a,b, \varphi)$ stand for the compact set enclosed by the ellipse with semi-axes $a$ and $b$, tilted by an angle $\varphi$ with respect to the $x$-axis, namely
 \begin{equation}\label{ellrot}
E(a,b, \varphi)= e^{i \varphi} \,\left\{(x,y)\in \R^2: \frac{x^2}{a^2}+\frac{y^2}{b^2} \le 1\right\}.
\end{equation}
 If $\varphi=0$, we use the notation  
 \begin{equation}\label{el}
E_0(a,b):= E(a,b,0)= \left\{(x,y)\in \R^2: \frac{x^2}{a^2}+\frac{y^2}{b^2} \le 1\right\}.
\end{equation}
We will often refer to these sets as the `interior' of the boundary ellipse, interior not having here the usual topological meaning.

We are now ready to state our main result.

\begin{theorem}\label{te}
There exists $\ep_0>0$ such that if $\kappa$ is an even real function, homogeneous of degree $0,$ of class $C^3$ off the origin, and satisfies the smallness condition
\begin{equation}\label{small}
 |\nabla^j \kappa(z)| \le \ep_0 \quad \textrm{for } \, |z|=1 \quad \textrm{and }\, j \in \{0,1,2,3\},
\end{equation}
then there exists a triple $(a,b,\varphi)$ such that the probability measure $\chi_E /|E|$, with $E =E(a,b,\varphi)$ as in \eqref{ellrot}, is the unique minimiser of the energy \eqref{en0}.
\end{theorem}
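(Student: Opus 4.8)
The plan is to set up the minimisation as a perturbation of the classical Coulomb problem, using the Euler–Lagrange equation and the known explicit solution of the unperturbed problem. The minimiser $\mu$ of $I^\kappa$ is characterised by the variational inequality: there is a constant $C$ (the Robin constant) such that the potential
$$
U^\mu(z) := 2\int_{\R^2} W^\kappa(z-w)\,d\mu(w) + |z|^2
$$
equals $C$ on $\supp\mu$ and is $\ge C$ off the support. Existence and uniqueness of a compactly supported minimiser follow from strict convexity of $I^\kappa$ on probability measures — here the smallness of $\kappa$ is needed to ensure that the perturbed interaction is still positive definite in the relevant sense, since $-\log$ is only conditionally positive definite and the quadratic confinement together with a small $C^3$ perturbation preserves coercivity and strict convexity. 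So the real content is to show that the (unique) solution of the Euler–Lagrange equation is the normalised characteristic function of an ellipse for a suitable triple $(a,b,\varphi)$.

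The strategy for this is to \emph{parametrise} candidate ellipses by $(a,b,\varphi)$ (normalised, say, by $ab=1$ so that $\chi_E/|E|$ has the right total mass, leaving a two-parameter family, or keeping all three and imposing the constraint later), compute for $\mu_{a,b,\varphi} = \chi_{E(a,b,\varphi)}/|E(a,b,\varphi)|$ the potential $U^{\mu_{a,b,\varphi}}$ \emph{inside} $E$, and require that it be constant there. Inside an ellipse the Newtonian (logarithmic) potential of the uniform density is an explicit quadratic polynomial in $x,y$ plus a constant; the confinement term $|z|^2$ is also quadratic; so the logarithmic part of the Euler–Lagrange equation reduces, inside $E$, to the vanishing of a quadratic form whose coefficients depend on $(a,b,\varphi)$. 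The perturbation adds $2\int \kappa(z-w)\,d\mu_{a,b,\varphi}(w)$, and the first key computation is to show that, because $\kappa$ is even and $0$-homogeneous, this perturbation potential is \emph{also} (to the relevant order, or exactly modulo lower-order harmonic terms that can be absorbed) a quadratic form in $(x,y)$ inside $E$, up to an additive constant — this is the structural miracle that makes ellipses persist, and it is where the hypotheses on $\kappa$ are really used. One would expand $\kappa$ in its Fourier/spherical-harmonic series on the circle and use that the Newtonian potential inside $E$ of $\chi_E \cdot (\text{angular harmonic})$ has a controlled polynomial structure; the $C^3$ regularity guarantees the series converges well enough.

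Granting that the $\kappa$-contribution to the interior potential is $Q_\kappa(x,y) + \text{const}$ with $Q_\kappa$ a quadratic form depending (continuously, indeed smoothly) on $(a,b,\varphi)$ and vanishing as $\ep_0\to 0$, the Euler–Lagrange equation inside $E$ becomes a system of three scalar equations (the three coefficients of a symmetric quadratic form in two variables) for the three unknowns $(a,b,\varphi)$, modulo the normalisation. At $\kappa=0$ this system is solved by the disc $a=b=1$, $\varphi$ arbitrary; the plan is to apply a quantitative implicit function theorem / fixed-point argument around this solution. The derivative of the unperturbed map at the disc must be shown invertible on the appropriate quotient (the $\varphi$-degeneracy at the disc is harmless because when $a=b$ the perturbation is genuinely present and breaks the rotational symmetry — one treats the problem as: given $\kappa$, find the ellipse; the linearisation of ``interior potential of a uniform ellipse'' with respect to eccentricity and orientation is the classical nondegenerate computation underlying \cite{CMMRSV}). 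Then for $\ep_0$ small enough, Banach fixed point (or IFT) yields a unique $(a,b,\varphi)$ near $(1,1,\cdot)$ solving the interior equation; this is the main analytic obstacle, since one must make the linearised operator's invertibility quantitative and control the $\kappa$-dependence uniformly.

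Finally, one must check the \emph{exterior} inequality $U^{\mu_{a,b,\varphi}} \ge C$ on $\R^2\setminus E$. For the unperturbed disc this is classical (the logarithmic potential of $\chi_{B_1}/\pi$ plus $|z|^2$ is strictly above its boundary value outside), with a strict sign and a quantitative gap; since $\mu_{a,b,\varphi}\to \mu_{\text{disc}}$ and $\kappa$ is uniformly small in $C^0$ (hence its potential is uniformly small in $C^0$), the exterior inequality persists for $\ep_0$ small. By uniqueness of the minimiser (from strict convexity), the ellipse we found \emph{is} the minimiser, completing the proof. The step I expect to be the true crux is establishing that the interior $\kappa$-potential is a quadratic polynomial (modulo constants) — equivalently, that $\Delta$ applied twice to the interior perturbation potential vanishes, or a suitable exact version thereof — since without this exact algebraic structure the Euler–Lagrange equation cannot be solved by an ellipse at all, only approximately; handling the error terms and showing one can still close the argument (perhaps by a more subtle ansatz than a pure ellipse, or by showing the obstruction vanishes identically thanks to evenness and $0$-homogeneity) is where the delicate work lies.
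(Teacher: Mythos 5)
Your overall blueprint matches the paper's: use the Euler--Lagrange characterisation (constant potential on the support, lower bound off it), exploit the quadratic structure of the potential of a uniform ellipse to reduce the first condition to a finite-dimensional system, solve that system near the disc by an implicit-function argument, and close the exterior inequality perturbatively using the strict inequality for $\kappa=0$. However, there are two places where you explicitly punt on the key difficulties, and in both cases your tentative guess about how to resolve them is wrong or incomplete.

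First, the ``structural miracle.'' You say you expect the $\kappa$-potential inside $E$ to be a quadratic polynomial only ``to the relevant order, or exactly modulo lower-order harmonic terms,'' and suggest that if it fails exactly one might need ``a more subtle ansatz than a pure ellipse.'' In fact the result is exact and the ansatz is a pure ellipse, but the mechanism is not the Fourier-series-plus-Newtonian-potential computation you sketch. The paper's key tool is a Calder\'on--Zygmund lemma (Lemma~\ref{constel} / Corollary~\ref{coro}): for any even, $(-d)$-homogeneous kernel $H$ with zero mean on the sphere, the principal-value operator $T$ with kernel $H$ satisfies $T(\chi_E)\equiv\text{const}$ on the interior $E$ of any ellipsoid, with an explicit formula for the constant. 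Since $\kappa$ is even and $0$-homogeneous of class $C^3$, each second derivative $\partial_{ij}\kappa$ is (up to a multiple of $\delta_0$) exactly such a kernel; hence $\partial_{ij}(\kappa\star\chi_E)$ is constant on $E$, i.e.\ $\kappa\star\chi_E$ is exactly a quadratic polynomial inside $E$. Without this lemma you cannot justify using an exact ellipse, which is the crux you yourself flag.

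Second, the IFT step. You assert the linearisation at the disc is ``the classical nondegenerate computation.'' It is not: at $(a,b,\varphi)=(1,1,\varphi)$ the Jacobian of the three-equation system has a vanishing $\partial/\partial\varphi$ column (because every $\varphi$ gives the disc when $a=b$), so the IFT does not apply directly. The paper resolves this in two steps that your proposal does not contain: (i) a preliminary rotation of coordinates chosen so that the first Fourier coefficient $b_1$ of $\kappa$ (which is exactly the inhomogeneous term $I_3(\kappa)$ in the third equation, computed in Lemma~\ref{bone}) vanishes; and (ii) dividing the third equation by $\lambda=(a-b)/(a+b)$ and extending by continuity at $\lambda=0$, which produces a regularised system $G=0$ whose Jacobian at $(1,0,0,0)$ is the invertible matrix \eqref{jac0}. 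Without this regularisation the implicit function theorem simply fails at the disc, and the degeneracy is \emph{not} ``broken by the perturbation'' in any way that makes the naive linearisation work.

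Your treatment of existence, uniqueness via positivity of $\widehat{W^\kappa}$, and the exterior inequality via a security region around $B(0,1)$ using convergence of potentials and of the Robin constants are all consistent with the paper. But the two gaps above are precisely where the proof's real content lies, and your proposal leaves both open, one with a materially incorrect guess about the nature of the obstruction.
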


The result in Theorem~\ref{te} is threefold: it gives existence, uniqueness and characterisation of the minimiser of the energy \eqref{en0}. While the existence is quite standard, uniqueness and characterisation are more subtle. 

To prove \textit{uniqueness}, we show that the energy $I^\kappa$ is strictly convex on a class of measures that are relevant for the minimisation. We achieve this by showing that the Fourier transform of the potential $W^\kappa$ is positive outside zero. Note that $W^\kappa=W^0+\kappa$, where $W^0=-\log|\cdot|$ is the logarithmic potential. For $W^0$ it is known that
\begin{equation*}
\widehat{W^0}(\xi)= 2\pi \pv \frac{1}{|\xi|^2}, \quad \xi\neq 0,
\end{equation*}
so clearly $\widehat{W^0}(\xi)>0$ for $\xi \neq 0$. In Section~\ref{FT} we show that the assumptions on $\kappa$ ensure that $\hat \kappa$ has a similar structure as $\widehat{W^0}$, and that adding $\hat\kappa$ to $\widehat{W^0}$ does not disrupt its positivity outside the origin. 

For the \textit{characterisation} of the minimiser, we use the Euler-Lagrange conditions
\begin{align}\label{in:EL1}
 \left(W^\kappa\star \mu  \right) (z)+ \frac{1}{2}|z|^2&= C \quad \text{for }\mu\text{-a.e. } z \in \supp\mu,\\
 \left(W^\kappa\star \mu  \right) (z)+ \frac{1}{2}|z|^2&\ge C \quad \text{for }z \in \R^2 \setminus N \text{ with } \operatorname{Cap}(N)=0, \label{in:EL2}
\end{align}
where $\supp\mu$ stands for the support of $\mu$, $C$ is a constant, and $\operatorname{Cap}$ is the logarithmic capacity.
Due to the strict convexity of the energy, these conditions are equivalent to minimality, and are satisfied by the unique minimiser of the energy only.  

For the first Euler-Lagrange condition \eqref{in:EL1}, our approach is to impose that $\chi_E /|E|$, for a generic ellipse $E$ as in \eqref{ellrot}, satisfies it. We recall that $-\log|\cdot| \star \chi_E$ is quadratic on $E$ for any ellipse, see, e.g., \cite{HMV}, where the potential has been computed explicitly in the context of Kirchhoff ellipses in fluid dynamics. Hence \eqref{in:EL1} can only be satisfied if also $\kappa\star \chi_E$ is quadratic on $E$. We prove that this is indeed the case, by showing that the convolution against $\partial_{ij}\kappa$ defines a special Calder\'on-Zygmund operator, which has the property of being constant on $E$, when evaluated on $\chi_E$. In other words, the assumptions on $\kappa$ (notably without the smallness condition) guarantee that 
$$
\partial_{ij} \left(\left(W^\kappa_{\alpha}\star \mu  \right) (z)+ \frac{1}{2}|z|^2\right) = \text{constant} \quad \text{on } E, 
$$
for $i,j=1,2$. Imposing that the constant is zero (as derived from \eqref{in:EL1}) gives a system of three equations (for the derivatives $\partial_{11}$, $\partial_{12}$ and $\partial_{22}$) in three unknowns (the semi-axes $a$ and $b$ and the tilting angle $\varphi$). We show that this system admits a unique solution for $\kappa$ small by resorting to a non-trivial application of the Implicit Function Theorem. 

For the second Euler-Lagrange condition \eqref{in:EL2}, instead, we adopt a purely perturbative argument, which exploits the `closeness' of every term of the equation, for $\kappa$ small, to the corresponding term of the second Euler-Lagrange condition for the case $\kappa=0$.

\begin{remark}[Tilting angle $\varphi$]\label{rem:00}
For kernels that are even in each variable separately, our proof yields $\varphi=0$ for the minimising ellipse.
The simplest case in which the minimising ellipse has a rotation angle $\varphi\neq 0$ is 
\begin{equation*}
 \kappa(z) =\beta \,\frac{xy}{|z|^2},
\end{equation*}
as was shown in  \cite{CMMRSV}. To see this, we express $\kappa$ in terms of the rotated variables $w:=(u,v) = e^{-i \pi/4} z$ and obtain the kernel 
$$
\tilde{\kappa}(w) = \frac{\beta}{2} \,\frac{u^2-v^2}{|w|^2},
$$
where $\tilde{\kappa}(w):= \kappa(z(w))$. Since $\tilde{\kappa}(w) = \beta \frac{u^2}{|w|^2} - \frac\beta2$, this kernel yields the minimisation problem with interaction potential 
$$
 -\log|w|+\beta\, \frac{u^2}{|w|^2},
$$
for which we know that minimisers are normalised characteristic functions of domains enclosed by ellipses with angle $\varphi=0$ and semi-axes $\sqrt{1-\beta}$ and $\sqrt{1+\beta}$, for $\beta\in (-1,1)$ 
(see \cite{CMMRSV}). Thus the unique minimiser
for the energy  with interaction potential
$$
-\log|z|+\beta\, \frac{xy}{|z|^2}
$$
is, for each $\beta\in (-1,1)$, the normalised characteristic function of 
$$E(\sqrt{1-\beta}, \sqrt{1+\beta}, \pi/4).$$

\end{remark}

\begin{remark}[Examples of perturbation kernels] The kernels
\begin{equation}\label{higher-anis}
\kappa(z)=\beta \frac{x^{2\ell}}{|z|^{2\ell}}, \quad \ell\in \mathbb N,
\end{equation}
are even, real-valued, homogeneous of degree $0$, and of class $C^3$ off the origin. 
Theorem~\ref{te} then guarantees that for small enough $\beta$ the corresponding energy $I^\kappa$ has a unique minimiser, which is 
the normalised characteristic function of the domain enclosed by some ellipse as in \eqref{el}, by Remark~\ref{rem:00}.
One may wonder what is the maximal interval in $\beta$ for which the minimiser is an ellipse. For $\ell=1$ we have a complete answer: the results in \cite{CMMRSV, MRS} show that for $\beta\in (-1,1)$ the energy minimiser is an ellipse, while
for $|\beta|\geq1$ it is a measure supported on a segment (the so-called semicircle law). Therefore, $(-1,1)$ is the maximal interval.
For $\ell\geq2$ the situation is unclear. For instance, for $\ell=2$ we only know from preliminary computations that for 
$$
-\frac{2}{3} < \beta < \frac{4}{3}
$$
the minimisation problem has a unique solution and there exists an ellipse solving the first Euler-Lagrange condition.
In this paper, however, we will not further investigate this possibility, and our focus will be on small perturbations $\kappa$.
\end{remark}

\subsection{Structure of the paper} 
In Section~2 we collect some  results on Calder\'on-Zygmund operators. In Section~3 we prove that the energy $I^\kappa$ admits a unique minimiser; in particular, we address the positivity of the Fourier transform of the interaction kernel. Section~\ref{sect:EL} is devoted to the proof of the existence of an ellipse satisfying the Euler-Lagrange conditions. Finally, in Section~5 we briefly discuss the higher-dimensional case.

\section{Notation and terminology} We now recall some useful results and establish some convention on notation and terminology. 

\subsection{The Fourier transform} 
The Fourier transform definition we adopt is 
$$
\widehat{\phi}(\xi) = \int_{\R^d} \phi(z) e^{-i \xi \cdot z}\, dz, \quad \xi \in \R^d,
$$
in any dimension $d\geq 1$, for functions $\phi$ in the Schwartz class.

We will use in several occasions a formula giving the Fourier transform of a distribution in $\R^d$ of the form
\begin{equation*}
\frac{\Phi_k(z)}{|z|^{d-s+k}}, \quad  z \in \R^d\setminus \{0\},
\end{equation*}
where $\Phi_k$ is a homogeneous harmonic polynomial of degree $k\geq 1$ and $0\le s \le d.$ In the case $s =0$ the above expression is understood in the principal value sense, as well as the expression on the Fourier side in the formula below for $s=d$.  The formula is 
\begin{equation}\label{foupolk}
\frac{\Phi_k(z)}{|z|^{d-s+k}}  \xrightarrow{\rm{Fourier}} (-i)^{k} 2^s \pi^{d/2} \frac{\Gamma(\frac{k+s}{2})}{\Gamma(\frac{k+d-s}{2})}\,\frac{\Phi_k(\xi)}{|\xi|^{k+s}},
\end{equation}
see \cite[Chapter~III, Theorem~5]{S} where a slightly different definition of the Fourier transform is adopted.

\subsection{Calder\'on-Zygmund operators} 
Let $T$ be an even smooth homogeneous convolution Calder\'on-Zygmund operator in $\R^d$, that is,
\begin{equation}\label{CZ-i}
T(f)(z) = \pv \int_{\R^d} f(w) H(z-w) \,dw,  \quad f \in L^2(\R^d),
\end{equation}
where $H$ is an even kernel, homogeneous of degree $-d$, of class $C^1$ off the origin and satisfying the cancellation property
\begin{equation}\label{canc}
\int_{|\xi|=1} H(\xi) \, d\sigma(\xi) =0.
\end{equation}
The above principal value integral is defined for almost all $z \in \mathbb{\R}^d.$

The Calder\'on-Zygmund constant of $T$ is defined to be 
\begin{equation}\label{CT}
 \|T\|_{\rm CZ}= \sup_{|\xi|=1} (|H(\xi)|+ |\nabla H(\xi)|).
\end{equation}
If $F\subset  \mathbb{\R}^d$, $f$ is a function defined on $F$ and $0 < \gamma<1$, we set 
$$
|f|_{\gamma,F}= \sup_{\substack{x,y \in F\\ x\neq y}} \frac{|f(x)-f(y)|}{|x-y|^{\gamma}}.
$$

The following lemma is a regularity result for the Calder\'on-Zygmund operator on smooth domains.

\begin{lemma}[\cite{MOV}]\label{intsingn}
 Let $D \subset \mathbb{\R}^d$ be a domain with boundary of class $C^{1,\gamma},$ $0 < \gamma<1,$ and $T$ an even smooth 
 homogeneous convolution Calder\'on-Zygmund operator in $\R^d.$  Then 
 \begin{equation}\label{czb}
  |T(\chi_D)(x)| \le C \, \|T\|_{\rm CZ} \quad \text{for }x \in \mathbb{\R}^d,
 \end{equation}
and
 \begin{equation}\label{czholder}
 |T(\chi_D)|_{\gamma,D}+  |T(\chi_D)|_{ \gamma, \mathbb{\R}^d \setminus \overline{D}}\le C \, \|T\|_{\rm CZ},
 \end{equation}
 for a positive constant $C$ depending on $d$, $\gamma$, and $D$. The constant $C$ depends only on the constants determining the $C^{1,\gamma}$-character of $D$.
\end{lemma}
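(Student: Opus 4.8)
This is the content of \cite{MOV}; let me indicate the strategy. The key observation is that $T(\chi_D)$ is well behaved \emph{away} from $\partial D$: on any ball contained in $D$ (or in $\R^d\setminus\overline D$) the density $\chi_D$ is locally constant, so differentiating under the integral sign and using only the size and smoothness bounds $|H(u)|\le\|T\|_{\rm CZ}|u|^{-d}$, $|\nabla H(u)|\le\|T\|_{\rm CZ}|u|^{-d-1}$ together with the cancellation $\pv\int_{|u|<r}H(u)\,du=0$ gives
\begin{equation*}
 |\nabla T(\chi_D)(x)|\le\frac{C\,\|T\|_{\rm CZ}}{\operatorname{dist}(x,\partial D)},\qquad x\in\R^d\setminus\partial D .
\end{equation*}
Thus the whole difficulty is the behaviour of $T(\chi_D)$ near $\partial D$, and this is the only point at which the evenness of $H$ is used in an essential way.

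I would first treat the model case of a half-space $\Pi=\{w:\,w\cdot\nu<0\}$. Then $T(\chi_\Pi)$ is invariant under translations parallel to $\partial\Pi$ and homogeneous of degree $0$ about any point of $\partial\Pi$, hence it is \emph{constant} on each of the two open half-spaces; moreover these two constants are finite, with absolute value bounded by $C_d\,\|T\|_{\rm CZ}$. Finiteness is exactly where evenness enters: the only a priori divergent (logarithmic) contribution to $T(\chi_\Pi)$ carries the coefficient
\begin{equation*}
 \int_{\{\omega\cdot\nu<0\}\cap S^{d-1}}H(\omega)\,d\sigma(\omega)=\fr\int_{S^{d-1}}H(\omega)\,d\sigma(\omega)=0 ,
\end{equation*}
the first equality holding by evenness of $H$ (reflect $\omega\mapsto-\omega$) and the second by the cancellation property \eqref{canc}; for an odd kernel this half-sphere integral is not zero in general, and $T(\chi_\Pi)$ fails to be constant---indeed to be bounded---on each side. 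In particular $\nabla T(\chi_\Pi)\equiv0$ off $\partial\Pi$.

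For the general domain, cover $\partial D$ by finitely many balls in each of which, after a rigid motion, $D$ is a subgraph $\{w_d>\varphi(w')\}$ with $\varphi\in C^{1,\gamma}$; at the relevant point $x_0\in\partial D$ one may take $\varphi(x_0')=0$ and $\nabla\varphi(x_0')=0$, with the $C^{1,\gamma}$-norm of $\varphi$ controlled by the $C^{1,\gamma}$-character of $D$. Given $x$ in the interior of $D$ near $\partial D$, let $x_0$ be its nearest boundary point, $\delta=\operatorname{dist}(x,\partial D)=|x-x_0|$, and $\Pi_{x_0}$ the half-space bounded by the tangent plane to $\partial D$ at $x_0$, on the same side as $D$. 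On $B(x,\delta/2)$ both $\chi_D$ and $\chi_{\Pi_{x_0}}$ equal $1$, so there $\nabla T(\chi_D)=\nabla T(\chi_D-\chi_{\Pi_{x_0}})$, since $\nabla T(\chi_{\Pi_{x_0}})\equiv0$ in the interior by the previous step. Now $\chi_D-\chi_{\Pi_{x_0}}$ is supported, near $x_0$, in the thin region where $w_d$ lies between $0$ and $\varphi(w')$, hence in $\{|w_d|\le M|w'-x_0'|^{1+\gamma}\}$; integrating $|\nabla H(x-w)|$ against the characteristic function of this region, and using $|x-w|\ge|x'-w'|=|w'-x_0'|$ together with $|x-w|\gtrsim\delta$ on the part close to $x_0$, one is reduced to an integral of the type
\begin{equation*}
 \|T\|_{\rm CZ}\int_{|w'|\le r_0}\frac{|w'|^{1+\gamma}}{\,(|w'|+\delta)^{d+1}\,}\,dw'\le C\,\|T\|_{\rm CZ}\,\delta^{\gamma-1} ,
\end{equation*}
the extra power $1+\gamma$ coming from the flatness turning the generic exponent $-1$ into $\gamma-1$ (the part of $\chi_D-\chi_{\Pi_{x_0}}$ far from $x_0$ is harmless, its tail being summable again because $\int_{\Pi\cap\{R<|w|<2R\}}H=0$). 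Hence $|\nabla T(\chi_D)(x)|\le C\,\|T\|_{\rm CZ}\,\operatorname{dist}(x,\partial D)^{\gamma-1}$, and likewise on $\R^d\setminus\overline D$. Since $\int_0^r s^{\gamma-1}\,ds<\infty$, integrating this bound along segments running towards and away from $\partial D$ shows that $T(\chi_D)$ extends continuously, and hence (together with the bound) in $C^{0,\gamma}$, from $D$ up to $\overline D$, and likewise from $\R^d\setminus\overline D$; this gives \eqref{czb}. For \eqref{czholder} one splits the pair $x,y$ (say with $\operatorname{dist}(x,\partial D)\le\operatorname{dist}(y,\partial D)$) into the regime $|x-y|<\fr\operatorname{dist}(x,\partial D)$---where the segment $[x,y]$ stays at distance $\gtrsim|x-y|$ from $\partial D$ and one integrates $|\nabla T(\chi_D)|\lesssim\|T\|_{\rm CZ}|x-y|^{\gamma-1}$ over a path of length $\lesssim|x-y|$---and the regime $|x-y|\ge\fr\operatorname{dist}(x,\partial D)$---where $x$ and $y$ both lie within $3|x-y|$ of a common boundary point and one compares $T(\chi_D)(x)$ and $T(\chi_D)(y)$ with the interior trace of $T(\chi_D)$ there, again by integrating the gradient bound along short paths.

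The step I expect to be the main obstacle is the near-boundary bookkeeping: proving rigorously that $T(\chi_\Pi)$ is a finite constant on each side of a half-space---that is, that the conditionally convergent logarithmic term genuinely vanishes, which is precisely the ``extra cancellation'' of even kernels and the reason evenness is assumed---and then propagating the $C^{1,\gamma}$ flatness estimate $|w_d|\le M|w'-x_0'|^{1+\gamma}$ through the (only conditionally convergent) comparison integrals so as to extract exactly the exponent $\gamma$, with a constant depending solely on the $C^{1,\gamma}$-character of $D$. The remaining ingredients are standard Calder\'on--Zygmund estimates.
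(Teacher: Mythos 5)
The paper does not actually prove this lemma; it cites it directly from \cite{MOV}, so there is no in-paper argument to compare against. Your sketch does capture the mechanism that drives the result in that reference: evenness forces the half-sphere cancellation $\int_{\{\omega\cdot\nu<0\}\cap S^{d-1}}H\,d\sigma=\tfrac12\int_{S^{d-1}}H\,d\sigma=0$ (the ``extra cancellation'' in the title of \cite{MOV}), which makes $T(\chi_\Pi)$ a finite constant on each side of a half-space, and the $C^{1,\gamma}$ flatness bound $|\varphi(w')|\le M|w'-x_0'|^{1+\gamma}$ then converts the comparison with the tangent half-space at the nearest boundary point into the exponent $\gamma$. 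This is a sound outline and a natural route to the lemma.

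Two points would need tightening before this becomes a proof. (i) Your parenthetical that the far part of $\chi_D-\chi_{\Pi_{x_0}}$ is summable ``because $\int_{\Pi\cap\{R<|w|<2R\}}H=0$'' is aimed at the wrong kernel for the gradient estimate: there you convolve against $\nabla H$, which is $(-d-1)$-homogeneous, so the tail over a half-space converges \emph{absolutely} with no cancellation required. The half-annulus cancellation of $H$ itself is what you need when you run the half-space comparison on $T(\chi_D)(x)$ directly (rather than on $\nabla T(\chi_D)$) to obtain the uniform bound \eqref{czb}; as written, you try to deduce \eqref{czb} by integrating the gradient bound, which only controls oscillation and still needs an anchor value. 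It is cleaner (and closer to \cite{MOV}) to prove \eqref{czb} independently by writing $T(\chi_D)(x)=T(\chi_{\Pi_{x_0}})(x)+T(\chi_D-\chi_{\Pi_{x_0}})(x)$ and bounding each term, reserving the gradient estimate for \eqref{czholder}. (ii) Differentiating $T(\chi_D-\chi_{\Pi_{x_0}})$ under the integral sign deserves a word of justification, since that integral is only conditionally convergent; a convenient device is to fix a ball $B\subset D\cap\Pi_{x_0}$ containing $x$ in its interior, use that $T(\chi_B)\equiv0$ on $B$ (Lemma~\ref{constel} with all semi-axes equal) to rewrite $T(\chi_D)(x)=\int_{D\setminus B}H(x-w)\,dw$ and $T(\chi_{\Pi_{x_0}})(x)=\int_{\Pi_{x_0}\setminus B}H(x-w)\,dw$, and then differentiate these fixed-domain, nonsingular integrals before subtracting.
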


As a consequence of Lemma~\ref{intsingn} we deduce the following result that proves the 
tangential continuity of the first order derivatives of a sort of primitive of $T(\chi_D)$.

\begin{lemma}\label{tangcont}
Let $D \subset \mathbb{\R}^d$ be a domain with boundary of class $C^{1,\gamma}$,   $0<\gamma<1$, and let $\tau(x)$ be a tangent vector to $\partial D$ at 
$x \in \partial D$. Let $G$ be an odd  kernel, homogeneous of degree $-(d-1),$ of class $C^2$ off the origin. Then the limits 
\begin{equation*}
\lim_{D \ni y \rightarrow x}	\langle \nabla G \star \chi_D (y), \tau(x) \rangle  
\qquad \text{and} \qquad \lim_{\overline{D} \not\ni y \rightarrow x}   \langle \nabla G \star \chi_D (y),  \tau(x) \rangle
\end{equation*}
exist and coincide for each $x \in \partial D.$
\end{lemma}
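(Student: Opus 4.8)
The plan is to introduce the potential $v:=G\star\chi_D$ and to deduce everything from the single fact that $v$ is continuous across $\partial D$ while being of class $C^1$ up to $\partial D$ \emph{separately} from inside $D$ and from outside $\overline D$: the two one-sided limits in the statement will then both equal the tangential derivative along $\partial D$ of the one function $v$, and hence coincide. Since $G$ is homogeneous of degree $-(d-1)$ and of class $C^2$ off the origin, it belongs to $L^1_{\rm loc}(\R^d)$, so $v$ is well defined and continuous on all of $\R^d$, and $\nabla G\star\chi_D$ is to be read as $\nabla(G\star\chi_D)=\nabla v$.

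First I would write the gradient of $v$ as a Calder\'on--Zygmund operator plus an explicit jump term. For $i=1,\dots,d$, since $G$ is homogeneous of degree $-(d-1)$, the divergence theorem on an annulus $\{\ep<|z|<1\}$ gives $\int_{|\omega|=1}\partial_i G(\omega)\,d\sigma(\omega)=0$; hence $\pv(\partial_i G)$ is a well-defined tempered distribution, and evaluating $\langle\partial_i G,\phi\rangle=-\langle G,\partial_i\phi\rangle$ by integrating over $\{|z|>\ep\}$ and letting $\ep\to0$ yields the identity
\begin{equation*}
\partial_i G \;=\; c_i\,\delta_0 \;+\; \pv(\partial_i G),\qquad c_i:=\int_{|\omega|=1}G(\omega)\,\omega_i\,d\sigma(\omega).
\end{equation*}
Because $G$ is odd, $\pv(\partial_i G)$ is an \emph{even} kernel, homogeneous of degree $-d$, of class $C^1$ off the origin, and satisfying the cancellation condition \eqref{canc}; it is therefore the kernel of an even smooth homogeneous convolution Calder\'on--Zygmund operator $T_i$, and convolving the identity above with $\chi_D$ gives $\partial_i v=c_i\,\chi_D+T_i(\chi_D)$ on $\R^d$.

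Next I would apply Lemma~\ref{intsingn}: each $T_i(\chi_D)$ is bounded on $\R^d$ and $\gamma$-H\"older continuous on $D$ and on $\R^d\setminus\overline D$, so it extends continuously to $\overline D$ and to $\R^d\setminus D$. Since $\chi_D$ is locally constant off $\partial D$, the same is true of $\nabla v$; and a continuous function whose distributional gradient is continuous on an open set is of class $C^1$ there. Combining these facts with the $C^{1,\gamma}$ regularity of $\partial D$ (needed to join nearby interior and boundary points by paths in $\overline D$, resp.\ in $\R^d\setminus D$, of comparable length) I would conclude that $v|_{\overline D}$ and $v|_{\R^d\setminus D}$ are of class $C^1$ up to $\partial D$, with one-sided gradient limits
\begin{equation*}
(\nabla v)^{\rm int}(x)=\lim_{D\ni y\to x}\nabla G\star\chi_D(y),\qquad
(\nabla v)^{\rm ext}(x)=\lim_{\overline D\not\ni y\to x}\nabla G\star\chi_D(y),
\end{equation*}
which are exactly the limits appearing in the statement.

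Finally, fixing $x\in\partial D$ and a tangent vector $\tau(x)$, I would choose a $C^1$ curve $\sigma\colon(-\delta,\delta)\to\partial D$ with $\sigma(0)=x$ and $\sigma'(0)=\tau(x)$ (possible since $\partial D$ is $C^1$). The function $t\mapsto v(\sigma(t))$ is the restriction of $v|_{\overline D}\in C^1$ to $\sigma$, hence differentiable with derivative $\langle(\nabla v)^{\rm int}(\sigma(t)),\sigma'(t)\rangle$; it is equally the restriction of $v|_{\R^d\setminus D}\in C^1$ to $\sigma$, hence also differentiable with derivative $\langle(\nabla v)^{\rm ext}(\sigma(t)),\sigma'(t)\rangle$. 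As the derivative of a function is unique, evaluating at $t=0$ gives $\langle(\nabla v)^{\rm int}(x),\tau(x)\rangle=\langle(\nabla v)^{\rm ext}(x),\tau(x)\rangle$, which is the claim. The only step that is not routine once Lemma~\ref{intsingn} is available is the upgrade in the third paragraph from ``$\nabla v$ extends continuously to $\partial D$ from each side'' to ``$v$ is $C^1$ up to $\partial D$ from each side''; this is where the smoothness of $\partial D$ genuinely enters.
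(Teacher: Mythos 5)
Your proof is correct and follows essentially the same route as the paper: you form the distributional decomposition $\partial_i G = c_i\,\delta_0 + \pv(\partial_i G)$ with $c_i=\int_{|\omega|=1}G(\omega)\omega_i\,d\sigma(\omega)$, check the cancellation condition by the divergence-theorem-on-an-annulus argument, invoke Lemma~\ref{intsingn} to obtain that $v=G\star\chi_D$ is Lipschitz with $\nabla v$ H\"older up to $\partial D$ from each side, and then extract the tangential coincidence. The only difference is cosmetic and lies in the last step: the paper works in a local graph chart $y=\phi(x)$ and passes $\tfrac{d}{dx}f(x,\phi(x)\mp\ep)$ to the limit using the Lipschitz continuity of $f$ and uniform convergence of the derivative, whereas you make the one-sided $C^1$-up-to-boundary structure of $v$ explicit and differentiate along a $C^1$ curve $\sigma$ in $\partial D$ via the chain rule; this is a more invariant rephrasing of the same computation, at the modest cost of having to justify the $C^1$-up-to-the-boundary upgrade, a point you correctly flag as the step where the $C^{1,\gamma}$ regularity of $\partial D$ genuinely enters.
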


\begin{proof}
We compute the distributional gradient of $G$. Note that $\nabla G$ is an even kernel, with values in $\mathbb{\R}^d,$ homogeneous of degree $-d$, of class $C^1$ off the origin.

The gradient of $G$ in the  sense of distributions is a constant multiple of the Dirac delta at the origin plus the principal value distribution associated with the kernel $\nabla G$. More precisely
\begin{equation*}
\nabla G= \left(\int_{|\xi|=1} G(\xi) \xi \, d\sigma(\xi) \right) \delta_0+ \pv \nabla G.
\end{equation*}

It is a simple matter realising that 
\begin{equation}\label{Gi0}
\int_{|\xi|=1} \partial_{i} G(\xi) \, d\sigma(\xi) =0, \quad i=1,\dots, d. 
\end{equation}
To see this, we first note that, by applying the Divergence Theorem,
\begin{equation}\label{aba}
\int_{\frac{1}{2}< |z|<1} \partial_{i}G(z)\,dz = -\int_{|\xi|=\frac12}G(\xi) \nu_i(\xi)\,d\sigma(\xi)+ \int_{|\xi|=1}G(\xi) \nu_i(\xi)\,d\sigma(\xi)=0,
\end{equation}
since, by the homogeneity of $G$, the integral $\int_{|\xi|=r} G(\xi) \nu_i(\xi)\,d\sigma(\xi)$ is independent of $r>0$, where $\nu_i(\xi)$ 
is the $i$-th component of the exterior unit normal vector to the sphere centred at $0$ of radius $r$, at the point $\xi$.  Moreover, by using again the homogeneity of $G$, we conclude that
\begin{align*}
\int_{\frac{1}{2}< |z|<1} \partial_{i}G(z)\,dz&= \int_{\frac{1}{2}< |z|<1} |z|^{-d}\,\partial_{i}G\left(\frac{z}{|z|}\right)\,dz
=\int_{\frac12}^1\frac1\rho d\rho \int_{|\xi|=1}  \partial_{i} G(\xi) \, d\sigma(\xi)\\
& = \log 2 \,\int_{|\xi|=1} \partial_{i} G(\xi) \, d\sigma(\xi),
\end{align*}
and hence, by \eqref{aba}, prove \eqref{Gi0}. 

This shows that $\partial_{i}G$ is the kernel of an even homogeneous Calder\'on-Zygmund 
operator to which one can apply Lemma~\ref{intsingn}.
Therefore, $G\star\chi_D$ is a Lipschitz function in $\mathbb{\R}^d$ and 
$\nabla G\star \chi_D$ satisfies a H\"older condition of order $\gamma$ in $D$ and in $\mathbb{\R}^d \setminus \overline{D}$.
In particular, $G\star\chi_D$ is a function of class $C^{1,\gamma}$ in $D$ and in $\R^d\setminus\overline D$.
Hence the limits
 \begin{equation*}
\lim_{D \ni y \rightarrow x}\nabla G \star \chi_D (y) 
\qquad \text{and} \qquad \lim_{\overline{D} \not\ni y \rightarrow x}    \nabla G \star \chi_D (y)
\end{equation*} 
exist, for $x \in \partial D$, although they are not necessarily equal. 

We now show that they coincide tangentially. For that assume $d=2$ to simplify
the notation. Given $z_0=(x_0,y_0) \in \partial D$, take $r$ small enough so that, renaming the variables if necessary, there exists a function $\phi:I=(x_0-r, x_0+r)\to \R$ of class $C^{1,\gamma}$  such that
\begin{equation*}
Q(z_0,r) \cap D = \{(x,y)\in Q(z_0,r): \; y < \phi(x) \},
\end{equation*} 
where $Q(z_0,r)=  \{(x,y): |x-x_0|<r, \, |y-y_0|<r \}.$ 
Set $f(z)= G \star \chi_{D}(z), \; z \in \mathbb{\R}^2.$  Since $f$ is a Lipschitz function,
we have 
\begin{equation*}
f(x, \phi(x)-\varepsilon) \xrightarrow{\varepsilon \to 0} f(x, \phi(x)) \quad \text{uniformly in } I.
\end{equation*} 
Assume, without loss of generality, that $\tau((x,\phi(x)))=(1,\phi'(x))$, 
$x \in I.$ Thus we have that, in the weak convergence
of distributions on $I$,
\begin{equation*}
\langle \nabla f(x, \phi(x)-\varepsilon), \tau(x,\phi(x))\rangle = \frac{d}{dx} f(x, \phi(x)-\varepsilon) \xrightarrow{\varepsilon \to 0}
\frac{d}{dx} f(x, \phi(x)).
\end{equation*} 
Note that, in view of the H\"older regularity of $\nabla f$ in $D$, the convergence of the left-hand side is uniform in $I$. Since one can repeat the argument with $f(x, \phi(x)-\varepsilon)$ replaced by $f(x, \phi(x)+\varepsilon)$ the proof is complete.
\end{proof}

The next lemma establishes the behaviour of Calder\'on-Zygmund operators on ellipsoids.  
The behaviour on balls was first proved in \cite{MOV}, see also \cite{I87} for the special case of the Beurling transform.

\begin{lemma}\label{constel}
Let $T$ be an even homogeneous convolution Calder\'on-Zygmund operator in $\R^d$ of the form \eqref{CZ-i},
where the kernel $H$ is even, homogeneous of degree $-d$, integrable with respect to the $(d-1)$-dimensional surface measure on the unit sphere and satisfying the cancellation property
\eqref{canc}.
Let $E\subset \R^d$ be the domain enclosed by an ellipsoid. Then $T(\chi_E)$ is constant on $E.$ In particular, if
\begin{equation}\label{ell}
E=\left\{x=(x_1,...,x_d) \in \mathbb{R}^d: \frac{x_1^2}{a_1^2}+ \dots+\frac{x_d^2}{a_d^2} \le 1\right\},
\end{equation}
then the constant value of $T(\chi_E)$ in $E$ is

\begin{equation}\label{vconst}
-\frac{1}{2}\int_{|\xi|=1} \log \left(\frac{\xi_1^2}{a_1^2}+ \dots+\frac{\xi_d^2}{a_d^2}\right) H(\xi) \, d\sigma(\xi).
 \end{equation}
\end{lemma}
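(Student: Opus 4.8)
The plan is to reduce the statement to the known result on balls via a linear change of variables, and then compute the explicit constant directly. First I would observe that the ellipsoid $E$ in \eqref{ell} is the image of the unit ball $B$ under the linear map $L(x) = (a_1 x_1, \dots, a_d x_d)$, with $\det L = a_1 \cdots a_d$. Writing $T(\chi_E)(x) = \pv \int_{\R^d} \chi_E(w) H(x-w)\,dw$ and substituting $w = L(v)$, $x = L(u)$, we get
\begin{equation*}
T(\chi_E)(L(u)) = (\det L) \, \pv \int_{\R^d} \chi_B(v) \, H\big(L(u-v)\big)\,dv.
\end{equation*}
The kernel $v \mapsto H(L(v))$ is still even and homogeneous of degree $-d$, but it need not satisfy the cancellation property on the unit sphere; however, its spherical average is a constant, so $H(L(\cdot)) = H_0(\cdot) + c_L |\cdot|^{-d}$, where $H_0$ has zero spherical mean and $c_L = \frac{1}{|\mathbb{S}^{d-1}|}\int_{|\xi|=1} H(L\xi)\,d\sigma(\xi)$. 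The term $H_0$ gives, by the ball case of the lemma (the result of \cite{MOV}, which is the $a_1 = \dots = a_d = 1$ instance, and which I am entitled to invoke), a constant on $B$; the term $c_L \pv |\cdot|^{-d} \star \chi_B$ is also constant on $B$ since the principal-value $|x|^{-d}$ convolved with a characteristic function of a ball is constant inside (this is the Riesz-kernel/Newtonian-type computation, or alternatively follows again from \cite{MOV} applied to a kernel that is a limit of zero-mean kernels). Hence $T(\chi_E) \circ L$ is constant on $B$, i.e.\ $T(\chi_E)$ is constant on $E$.

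For the \emph{explicit value} \eqref{vconst}, the cleanest route is via the observation that $T$ is, up to a multiplicative constant, the second derivative of a logarithmic-type potential. Concretely, one notes that any even kernel $H$ homogeneous of degree $-d$ with zero spherical mean can be written as $H = \sum_{i,j} \lambda_{ij} \partial_{ij} N$ where $N(x) = -\tfrac{1}{2}\log|x|$ if $d=2$ (resp.\ a constant times $|x|^{2-d}$ if $d \geq 3$) is the fundamental solution, plus a multiple of the delta coming from the Laplacian; more usefully, one expands $H$ in spherical harmonics, $H(\xi) = \sum_{k \geq 2,\, k \text{ even}} Y_k(\xi)$ with $Y_k$ a degree-$k$ spherical harmonic (the $k=0$ term vanishes by \eqref{canc}, the odd terms by evenness), so that $H(x) = \sum_k Y_k(x)/|x|^{d+k}$. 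Then one computes $Y_k(\cdot)/|\cdot|^{d+k} \star \chi_E$ term by term. The key identity is that the Newtonian-type potential $U_E(x) := (-\log|\cdot| \star \chi_E)(x)$ (for $d=2$; the analogue with $|\cdot|^{2-d}$ for $d\geq3$) is a quadratic polynomial on $E$ — this is the classical fact for ellipsoids, recalled in the introduction and used in e.g.\ \cite{HMV} — and more precisely its Hessian on $E$ is a constant symmetric matrix expressible as an integral over the unit sphere. Differentiating $U_E$ twice and pairing with the appropriate combination of derivatives that reconstructs $H$ yields the value of $T(\chi_E)$ on $E$ in terms of a spherical integral of $\log(\xi_1^2/a_1^2 + \dots + \xi_d^2/a_d^2)$ against $H$.

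A more transparent way to pin down the constant, which I would actually write, is a direct homotopy/scaling argument: consider the one-parameter family $E_t$ obtained by scaling $E$ by $t$, so $E_t = tE$, and compute $\frac{d}{dt}\big[T(\chi_{E_t})(0)\big]$ — wait, scaling changes nothing by homogeneity, so instead deform the ellipsoid. Fix $x=0 \in E$ and write the constant as $T(\chi_E)(0) = \pv\int_E H(-w)\,dw = \pv\int_E H(w)\,dw$ by evenness. Passing to "elliptical polar coordinates" adapted to $E$, i.e.\ $w = \rho\,\omega$ with $\omega$ ranging over $\partial E$ parametrised as $\omega = \omega(\xi) = (a_1\xi_1,\dots,a_d\xi_d)/\big(\sum a_i^2\xi_i^2\big)^{1/2} \cdot (\text{radius})$... this is getting complicated; cleaner is: substitute $w = L(v)$ as above to get $T(\chi_E)(0) = (\det L)\pv\int_B H(Lv)\,dv$, and then use that for \emph{any} even kernel $K$ homogeneous of degree $-d$, $\pv\int_B K(v)\,dv = -\tfrac12\int_{|\xi|=1} K(\xi)\,d\sigma(\xi)$. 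The latter follows by writing the principal value as $\lim_{\ep\to0}\int_{\ep<|v|<1}K(v)\,dv = \lim_{\ep\to0}\int_\ep^1 \frac{d\rho}{\rho}\int_{|\xi|=1}K(\xi)\,d\sigma(\xi)$ — but this diverges unless $\int_{|\xi|=1}K\,d\sigma=0$; the resolution is that the principal value of $\pv\int_B$ is interpreted as $\pv\int_{\R^d} \chi_B$, and one splits $\chi_B$ cleverly, or: the correct statement uses $\pv\int_{\R^d}\chi_B(v) K(x-v)\,dv$ at an interior point and the identity is
\begin{equation*}
\big(\pv\, K \star \chi_B\big)(0) = -\tfrac12 \int_{|\xi|=1} \log\!\Big(\tfrac{1}{|\xi|^2}\Big)\Big|_{\text{tautological}} \dots
\end{equation*}
— here I am handwaving; the honest derivation is to use the spherical-harmonic expansion $K = \sum_{k\geq 0,\,\text{even}} Y_k(\xi)/|\xi|^{d+k}$ and the elementary computations $\pv\int_B |v|^{-d}\,dv = 0$ (by the dilation argument, the zero-mean... no: $\int_{|\xi|=1}1\,d\sigma \neq 0$) and $\int_B Y_k(v)/|v|^{d+k}\,dv = \frac{1}{k}\int_{|\xi|=1}Y_k(\xi)\,d\sigma(\xi) = 0$ for $k \geq 2$ since $Y_k$ has zero spherical mean — so the value depends only on the $k=0$ component of $K(L\cdot)$, which is exactly the spherical average $c_L$, times the (regularised) value $\pv\int_B|v|^{-d}\,dv$. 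Since $|v|^{-d} = \Delta(-\tfrac12\log|v| \text{ or } c_d|v|^{2-d})$ up to a delta and $-\log|\cdot|\star\chi_B$ on $B$ equals $\tfrac14(1-|x|^2) - \tfrac12\log 1$... one evaluates and matches constants. Combining, the constant is $-\tfrac12\int_{|\xi|=1}\log\big(\sum_j \xi_j^2/a_j^2\big)H(\xi)\,d\sigma(\xi)$, where the logarithm arises precisely from integrating $H(L\xi)$'s dependence on $L$ against the Jacobian, i.e.\ from $\frac{d}{d(\text{scaling of }E)}$ of the log-potential. I would present this last step carefully, since matching the normalisation constant (the factor $\tfrac12$ and the argument of the log) is where errors creep in; everything else is routine.

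\textbf{Main obstacle.} The genuine difficulty is isolating and evaluating the explicit constant \eqref{vconst}: the "constant on $E$" part follows cleanly from the change of variables plus the ball case \cite{MOV}, but extracting the $\log\big(\sum \xi_j^2/a_j^2\big)$ requires either a careful spherical-harmonic bookkeeping of how $H \mapsto H \circ L$ shifts the zero-mean decomposition, or — what I think is cleanest — differentiating the identity in the \emph{axis lengths} $a_j$: one shows $\partial_{a_j} T(\chi_{E(a)})(0)$ reduces, after integration by parts in $w$, to a boundary integral over $\partial E$ that, transported to the sphere, gives $\int_{|\xi|=1}\frac{\xi_j^2/a_j^3}{\sum_i \xi_i^2/a_i^2}H(\xi)\,d\sigma(\xi)$, and this is exactly $\partial_{a_j}$ of $-\tfrac12\int_{|\xi|=1}\log\big(\sum\xi_i^2/a_i^2\big)H(\xi)\,d\sigma(\xi)$; checking that the two sides agree at $a_1=\dots=a_d=1$ (where the left side is the ball constant from \cite{MOV} and the right side is $-\tfrac12\int_{|\xi|=1}\log 1 \cdot H\,d\sigma = 0$, consistent with the mean-zero cancellation \eqref{canc}) closes the argument. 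Verifying that boundary-integral identity and the differentiation-under-the-principal-value are the technical crux.
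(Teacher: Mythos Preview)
Your reduction to the ball case has a real gap. After the substitution $w=Lv$, $x=Lu$, the principal value in the transformed integral is taken over $|L(u-v)|>\varepsilon$, i.e.\ over \emph{ellipsoidal} neighbourhoods of $u$, not spherical ones. If you silently replace this by the spherical cutoff $|u-v|>\varepsilon$ (which is what you need in order to invoke the ball result of \cite{MOV} for the zero-mean part $H_0$), the difference is supported on a shell of scale $\varepsilon$ and, for the piece $c_L|\cdot|^{-d}$, contributes a nonzero finite quantity as $\varepsilon\to0$. In particular your assertion that ``$c_L\,\pv|\cdot|^{-d}\star\chi_B$ is also constant on $B$'' is not well posed: with the spherical cutoff this principal value diverges like $\log(1/\varepsilon)$, and with the ellipsoidal cutoff it is not an object to which \cite{MOV} applies. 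The approach can be repaired by tracking the cutoff carefully, but as written the decomposition $H\circ L=H_0+c_L|\cdot|^{-d}$ does not do the job, and your later attempts at the constant inherit this confusion (you yourself flag the divergence).

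The paper bypasses all of this with a short direct computation that yields constancy and the explicit value simultaneously, without ever reducing to the ball. Fix $z$ in the interior of $E$, choose $R$ with $E\subset B(z,R)$, and use \eqref{canc} on the annulus $B(z,R)\setminus B(z,r)$ to rewrite
\[
T(\chi_E)(z)=-\int_{B(z,R)\setminus E}H(z-w)\,dw.
\]
Passing to polar coordinates centred at $z$ and using homogeneity and \eqref{canc} again to kill the $\log R$ term gives
\[
T(\chi_E)(z)=\int_{|\xi|=1}\log r(z,\xi)\,H(\xi)\,d\sigma(\xi)
=\tfrac12\int_{|\xi|=1}\log\big(r(z,\xi)\,r(z,-\xi)\big)\,H(\xi)\,d\sigma(\xi),
\]
where $r(z,\xi)>0$ is the distance from $z$ to $\partial E$ in the direction $\xi$ and the symmetrisation uses that $H$ is even. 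Now $r(z,\xi)$ and $-r(z,-\xi)$ are the two roots of the quadratic $\big|\tfrac{\xi}{a}\big|^2 r^2+2\langle\tfrac{z}{a},\tfrac{\xi}{a}\rangle r+\big|\tfrac{z}{a}\big|^2-1=0$, so their product is $r(z,\xi)r(z,-\xi)=\big(1-|z/a|^2\big)\,|\xi/a|^{-2}$. The factor $1-|z/a|^2$ is independent of $\xi$ and drops out by \eqref{canc}, leaving exactly \eqref{vconst}. This is the whole proof: no change of variables that breaks the cancellation, no separate invocation of the ball case, no spherical-harmonic bookkeeping, and no differentiation in the $a_j$.
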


\begin{proof} 
Let $E\subset \R^d$ be the domain enclosed by an ellipsoid. With no loss of generality, up to a translation, we can assume that $E$ is centred at the origin. 
Then we can express $E$ as $E=Q(E_0),$ where $Q\in SO(d)$ is an appropriate rotation and $E_0$ is
the interior of an ellipsoid of the form \eqref{ell}.
Changing variables according to $Q$ we obtain
\begin{align*}
T(\chi_E)(z) &= \pv \int_E H(z-w) \,dw = \pv \int_{Q(E_0)} H(z-w) \,dw =  \pv \int_{E_0} H(z-Q(w)) \,dw\\
&=\pv \int_{E_0} (H\circ Q)(Q^{-1}z-w) \,dw = T_{H\circ Q}(\chi_{E_0})(Q^{-1}z),
\end{align*}
where $ T_{H\circ Q}$ is defined as in \eqref{CZ-i}, but with $H$ replaced by $H\circ Q$. Since $H\circ Q$ is of the same type as $
H,$ it is enough to prove the statement about the domain enclosed by an ellipsoid of the form  \eqref{ell}.

Take a point $z$ in the interior of $E$ and a radius  $R$ so big that $E \subset B(z,R).$ Then, by using the definition of the principal value, we have that 
\begin{equation}\label{aaa}
T(\chi_E)(z) =\left(\pv H \star \chi_E\right)(z) = \lim_{r \rightarrow 0} \int_{E \setminus B(z,r)} H(z-w)\,dw =
-\int_{B(z,R)\setminus E} H(z-w)\,dw.
\end{equation}
The last equality follows by writing 
$$
\int_{E \setminus B(z,r)} H(z-w)\,dw = \int_{B(z,R) \setminus B(z,r)} H(z-w)\,dw - \int_{B(z,R)\setminus E} H(z-w)\,dw,
$$ 
and by changing to polar coordinated centred at $z$ in the integral
\begin{align*}
\int_{B(z,R) \setminus B(z,r)} H(z-w)\,dw &= \int_r^R\int_{|\xi|=1} H(\rho \xi)\rho^{d-1}d\rho \,d\sigma(\xi)\\
&= \int_r^R\bigg(\int_{|\xi|=1} H( \xi) d\sigma(\xi)\bigg) \frac{1}{\rho}\, d\rho=0,
\end{align*}
where we have also used the homogeneity of $H$ and \eqref{canc}.

We now evaluate the last integral in \eqref{aaa} by taking again polar coordinates centred at $z$.  Given $\xi\in \R^d$ with $|\xi|=1$,  denote by $r(z,\xi)$ 
the unique positive number $r$ such that $z+r \xi$ lies in the boundary of the ellipsoid \eqref{ell}. Then
\begin{align*}
\int_{B(z,R)\setminus E} H(z-w)\,dw &
=  \int_{|\xi|=1}\int_{r(z,\xi)}^R H(t\xi) t^{d-1} \,dt d\sigma(\xi) \\
& =\int_{|\xi|=1}\int_{r(z,\xi)}^R H(\xi) \frac 1t \,dt d\sigma(\xi)= \int_{|\xi|=1} \log \left(\frac{R}{r(z,\xi)}\right) H(\xi)\, d\sigma(\xi),
\end{align*}
where we have used that $H$ is even and $(-d)$-homogeneous. Hence 
\begin{equation*}
\begin{split}
T(\chi_E)(z)  &= - \int_{|\xi|=1} \log \left(\frac{R}{r(z,\xi)}\right) H(\xi)\, d\sigma(\xi)
= - \int_{|\xi|=1} \log \left(\frac{1}{r(z,\xi)}\right) H(\xi)\, d\sigma(\xi) \\*[7pt]
& = -\frac{1}{2} \int_{|\xi|=1} \log \left(\frac{1}{r(z,\xi) r(z,-\xi)}\right) H(\xi)\, d\sigma(\xi),
\end{split}
\end{equation*}
where in the second identity we have used \eqref{canc} and in the third that $H$ is even.
There are exactly two points in the straight line $z+ r \xi, \; r \in \mathbb{R},$ that belong to the boundary of
\eqref{ell}. They correspond to the values $r=r(z,\xi)$ and $r=-r(z,-\xi).$ These values of the parameter $r$ 
are the solutions of
the second degree equation
\begin{equation*}
\left|\frac{\xi}{a}\right|^2 \,r^2+ 2  \Big\langle \frac{z}{a}, \frac{\xi}{a} \Big \rangle \, r + \left|\frac{z}{a}\right|^2-1 =0,
\end{equation*}
where we use the notation 
$$\frac{z}{a}= \left(\frac{z_1}{a_1}, \dots,\frac{z_d}{a_d}\right), \quad z=(z_1, \dots,z_d), \quad  a=(a_1, \dots,a_d),$$
and the brackets stand for scalar product in $\mathbb{R}^d.$
  Hence
\begin{equation*}
r(z,\xi) r(z,-\xi) = \left(1-\left|\frac{z}{a}\right|^2\right)\, \left|\frac{\xi}{a}\right|^{-2},
\end{equation*}
which yields \eqref{vconst} in view of \eqref{canc}.
\end{proof}

The following corollary is for the case $d=2$.

\begin{corollary}\label{coro}
Let $T$ be an even homogeneous Calder\'on-Zygmund singular integral in the plane, with kernel $H$ as in Lemma~\ref{constel}, and let $E$ be the compact set enclosed by a tilted ellipse, namely
 $$
 E= E(a,b,\varphi) = e^{i \varphi} \,\bigg\{(x,y) \in \mathbb{R}^2 : \frac{x^2}{a^2}+ \frac{y^2}{b^2} \le 1\bigg\}.
 $$
 Then the constant value of $T(\chi_E)$ on $E$ is
 \begin{equation}\label{constell}
 -\frac{1}{2}\int_{|\xi|=1} \log \bigg(\frac{{\langle \xi, e^{i \varphi} \rangle}^2}{a^2}+
 \frac{{\langle \xi, i e^{i \varphi} \rangle}^2}{b^2}\bigg) H(\xi) \, d\sigma(\xi). 
 \end{equation}
\end{corollary}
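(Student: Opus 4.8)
The plan is to deduce the statement directly from Lemma~\ref{constel}, carrying along the rotation that appears in its proof. In the plane, the compact set $E = E(a,b,\varphi)$ enclosed by a tilted ellipse is a domain enclosed by an ellipsoid in the sense of Lemma~\ref{constel}, so that lemma already guarantees that $T(\chi_E)$ is constant on $E$; the only task is to compute the value of this constant.

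Write $E = Q(E_0)$ with $E_0 = E_0(a,b)$ as in \eqref{el} and $Q \in SO(2)$ the rotation that, under the identification $\R^2 \cong \C$, acts as multiplication by $e^{i\varphi}$. As established in the proof of Lemma~\ref{constel},
$$
T(\chi_E)(z) = T_{H\circ Q}(\chi_{E_0})(Q^{-1}z),
$$
where $T_{H\circ Q}$ is the Calder\'on--Zygmund operator with kernel $H\circ Q$, which is again even, homogeneous of degree $-2$, integrable on the unit circle, and satisfies the cancellation property \eqref{canc}. Applying formula \eqref{vconst} of Lemma~\ref{constel} to $T_{H\circ Q}$ on $E_0$, the constant value of $T(\chi_E)$ on $E$ equals
$$
-\frac12 \int_{|\xi|=1} \log\left(\frac{\xi_1^2}{a^2} + \frac{\xi_2^2}{b^2}\right) (H\circ Q)(\xi)\, d\sigma(\xi).
$$

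To conclude I would change variables $\eta = Q\xi$ on the unit circle: since $Q$ is a rotation, $d\sigma$ is invariant and $\xi = Q^{-1}\eta$, whose first and second components are $\langle \eta, Q(1,0)\rangle$ and $\langle \eta, Q(0,1)\rangle$ respectively. Under the identification $\R^2 \cong \C$ one has $Q(1,0) = e^{i\varphi}$ and $Q(0,1) = i e^{i\varphi}$, so the last integral becomes
$$
-\frac12 \int_{|\eta|=1} \log\left(\frac{\langle \eta, e^{i\varphi}\rangle^2}{a^2} + \frac{\langle \eta, i e^{i\varphi}\rangle^2}{b^2}\right) H(\eta)\, d\sigma(\eta),
$$
which is precisely \eqref{constell}. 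The argument is essentially bookkeeping: all the analytic content sits in Lemma~\ref{constel}, and the only point requiring a little care is the invariance of the surface measure under $Q$ together with the correct identification of $Q(1,0)$ and $Q(0,1)$ as the vectors $e^{i\varphi}$ and $ie^{i\varphi}$ appearing in \eqref{constell}. Alternatively, one could repeat verbatim the polar-coordinate computation in the proof of Lemma~\ref{constel}, noting that the tilted ellipse is the level set $\{\langle w, Mw\rangle = 1\}$ of the positive definite quadratic form $M = a^{-2}\, e^{i\varphi}\otimes e^{i\varphi} + b^{-2}\, (ie^{i\varphi})\otimes(ie^{i\varphi})$, so that $r(z,\xi)\,r(z,-\xi) = (1 - \langle z, Mz\rangle)/\langle \xi, M\xi\rangle$; this yields \eqref{constell} directly via \eqref{canc}.
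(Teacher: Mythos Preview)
Your proof is correct and follows essentially the same approach as the paper: reduce to the axis-aligned case via the rotation identity $T(\chi_E)(z)=T_{H\circ Q}(\chi_{E_0})(Q^{-1}z)$ from the proof of Lemma~\ref{constel}, apply \eqref{vconst} with kernel $H\circ Q$, and then change variables $\eta=Q\xi$ on the unit circle to arrive at \eqref{constell}. Your write-up spells out the identification $Qe_1=e^{i\varphi}$, $Qe_2=ie^{i\varphi}$ and the invariance of $d\sigma$ a bit more explicitly than the paper does, and your closing remark about the direct quadratic-form computation is a valid alternative, but the substance is the same.
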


\begin{proof} Let $Q = e^{i\varphi}$ denote the rotation so that $E(a,b,\varphi) = Q(E_0)$, with $E_0=E(a,b,0)$. Proceeding as in the proof of Lemma~\ref{constel} we have that 
\begin{align*}
T(\chi_E)(z) &= T_{H\circ Q}(\chi_{E_0})(Q^{-1}z) = -\frac{1}{2}\int_{|\xi|=1} \log \Big(\frac{\xi_1^2}{a^2}+\frac{\xi_2^2}{b^2}\Big)(H\circ Q)(\xi) \, d\sigma(\xi)\\
&= -\frac{1}{2}\int_{|\xi|=1} \log \Big(\frac{\xi_1^2}{a^2}+\frac{\xi_2^2}{b^2}\Big) H(e^{i\varphi} \xi) \, d\sigma(\xi)\\
&= -\frac{1}{2}\int_{|\xi|=1} \log \bigg(\frac{{\langle \xi, e^{i \varphi} \rangle}^2}{a^2}+
 \frac{{\langle \xi, i e^{i \varphi} \rangle}^2}{b^2}\bigg) H(\xi) \, d\sigma(\xi).
\end{align*}
\end{proof}

\section{Existence and uniqueness of a compactly supported minimiser}
In this section we focus on the two-dimensional case $d=2$ and we show that the energy $I^\kappa$ in \eqref{en0} admits a unique minimiser for small perturbations $\kappa$.

\subsection{Existence of a minimiser} Existence of a minimiser for the energy functional \eqref{en0} follows from the direct method of the Calculus of Variations. Indeed, $I^\kappa$ is lower semicontinuous, since its overall kernel 
$$
W^\kappa(z-w) + \frac12(|z|^2+|w|^2)
$$
is lower semicontinuous and bounded from below (recall that $\kappa$ is bounded and continuous outside the origin, and $W^\kappa(0)=+\infty$). Moreover, 
\begin{align*}
I^\kappa(\mu) &= \int_{\R^2}\int_{\R^2}\left(W^\kappa(z-w) + \frac12(|z|^2+|w|^2)\right)\, d\mu(z) d\mu(w)\\
&\geq c\int_{\R^2}|z|^2 d\mu(z) -c',
\end{align*}
for some constants $c,c'>0$. Hence $I^\kappa$ is lower semicontinuous and satisfies the lower bound above, which guarantees tightness of minimising sequences. The same lower bound also guarantees that minimisers are compactly supported.

\subsection{Uniqueness of the minimiser}\label{FT}
We show that  $I^\kappa$ admits a unique minimiser, under the assumptions of Theorem~\ref{te}, by showing that the Fourier transform of $W^\kappa$ is strictly positive outside zero. 

Note that $W^\kappa = W^0 + \kappa$, where $W^0(z) = -\log|z|$, for $z \in \C$, $z\neq0$. 
We recall that 
\begin{equation}\label{recall}
\widehat{W^0}(\xi)= 2\pi \pv \frac{1}{|\xi|^2}, \quad \xi\in \C, \quad \xi \neq 0.
\end{equation}
Clearly $\widehat{W^0}(\xi)>0$ for $\xi \neq 0$. We now show that the assumptions on $\kappa$ ensure that $\hat \kappa$ has a similar structure as $\widehat{W^0}$, and that adding $\hat\kappa$ to $\widehat{W^0}$ does not disrupt its positivity.

To see this, let $\kappa$ be as in the statement of Theorem~\ref{te}. As a first step we compute the Fourier transform of $\kappa$. Consider the Fourier expansion of the restriction of $\kappa$ to the unit circle:
\begin{equation}\label{fou}
\kappa(e^{i \theta}) = \sum_{n=0}^\infty  a_n \cos(2n\theta) + b_n \sin(2n\theta), \quad \theta \in \R,
\end{equation}
where $a_n, b_n\in \R$. Note that only even frequencies appear in \eqref{fou} because $\kappa$ is an even function.
Hence, by zero-homogeneity,
\begin{equation}\label{foudins}
\kappa(z) = \sum_{n=0}^\infty  a_n \frac{\re{z^{2n}}}{|z|^{2n}} + b_n \frac{\operatorname{Im}{z^{2n}}}{|z|^{2n}}, \quad z \in \C, \quad z \neq 0.
\end{equation}
We have now rewritten $\kappa$ in a form that allows us to compute its Fourier transform. Indeed, since $\re{z^{2n}}$ and $\operatorname{Im}{z^{2n}}$ are homogeneous harmonic polynomials of degree $2n$, by using  \eqref{foupolk} with $d=2$, $k=2n$ and $s=2$, we obtain the Fourier transform identities 
\begin{equation*}
\frac{\re{z^{2n}}}{|z|^{2n}}  \xrightarrow{\rm{Fourier}} (-1)^{n} 4\pi n \pv \frac{\re{\xi^{2n}}}{|\xi|^{2n+2}},
\end{equation*}
\begin{equation*}
\frac{\operatorname{Im}{z^{2n}}}{|z|^{2n}}  \xrightarrow{\rm{Fourier}} (-1)^{n} 4\pi n \pv \frac{\operatorname{Im}{\xi^{2n}}}{|\xi|^{2n+2}},
\end{equation*}
where $\xi\neq 0$. 
Then the Fourier transform $\hat \kappa (\xi)$ of $\kappa$, for $\xi\neq 0$, is
\begin{equation*}\label{fouker}
\kappa(z)  \xrightarrow{\rm{Fourier}} \sum_{n=1}^\infty (-1)^{n} 4\pi n a_n \pv\frac{\re{\xi^{2n}}}{|\xi|^{2n+2}} + (-1)^{n} 4\pi n b_n \pv \frac{\operatorname{Im}{\xi^{2n}}}{|\xi|^{2n+2}}.
\end{equation*}
From the definition of $W^\kappa = W^0 + \kappa$ and \eqref{recall} we then have that, for $\xi \neq 0$,
$$
\widehat{W^\kappa}(\xi)=2\pi \pv \frac{1}{|\xi|^2}+\sum_{n=1}^\infty (-1)^{n} 4\pi n \left(a_n \pv\frac{\re{\xi^{2n}}}{|\xi|^{2n+2}} + b_n \pv \frac{\im{\xi^{2n}}}{|\xi|^{2n+2}}\right).
$$
Since we have that for some absolute constant $C_0>0$,
$$
\sum_{n=1}^\infty 2n(|a_n|+|b_n|) \leq 2 \bigg(\sum_{n=1}^\infty \frac{1}{n^2}\bigg)^{\frac12} 
\bigg(\sum_{n=1}^\infty n^4(|a_n|^2+|b_n|^2)\bigg)^{\frac12} \leq C_0 \left\| \frac{d^2 \kappa(e^{i\theta})}{d\theta^2} \right\|_{L^2(0,2\pi)},
$$
we can estimate, for $\xi\neq 0$,
\begin{equation*}\label{fouposit}
\begin{split}
\widehat{W^\kappa}(\xi)  & \ge \frac{2\pi}{|\xi|^2}\bigg(1- \sum_{n=0}^\infty 2n(|a_n|+|b_n|)\bigg)
\ge \frac{2\pi}{|\xi|^2}\Bigg(1- C_0\, \sup_{\substack{|z|=1\\ j\in\{1,2\}}} |\nabla^j k(z)|\Bigg) 
\ge \frac{\pi}{|\xi|^2},
\end{split}
\end{equation*}
provided
\begin{equation*}
C_0 \, \sup_{\substack{|z|=1\\ j\in\{1,2\}}} |\nabla^j \kappa(z)| \le \frac{1}{2},
\end{equation*}
which is true if $\ep_0$ in \eqref{small} is small enough.

Since the positivity of the Fourier transform $\widehat{W^\kappa}(\xi)$ for $\xi \neq 0$  yields strict convexity of the energy functional \eqref{en0} as a function of $\mu$ (see, e.g., \cite[Section~4]{MMRSVIzv}), we have that $I^\kappa$ has a unique minimiser.

\section{The Euler-Lagrange conditions} \label{sect:EL}
The minimiser $\mu$ of \eqref{en0} is characterised by two conditions, 
called the Euler-Lagrange conditions, which can be expressed in terms of a potential that we define as follows. The potential of $\mu \in \mathcal{P}(\R^2)$ is defined as 
\begin{equation}\label{pot}
P^\kappa(\mu)(z) = \left(W^\kappa\star \mu  \right) (z)+ \frac{1}{2}|z|^2, \quad z\in \mathbb{C},
\end{equation}
where $W^\kappa$ is the interaction potential \eqref{intpot}.

The first Euler-Lagrange condition is 
\begin{equation}\label{E1}
P^\kappa(\mu)(z) = C \quad \text{for }\mu\text{-a.e. } z \in \supp\mu,
\end{equation}
where $\supp\mu$ stands for the support of $\mu$, and $C$ is a constant.
The second Euler-Lagrange condition is
\begin{equation}\label{E2}
P^\kappa(\mu)(z) \ge C \quad \text{for } z \in \R^2 \setminus N \text{ with } \operatorname{Cap}(N)=0,
\end{equation}
where the constant is the same that appears in the first Euler-Lagrange condition. Here $\operatorname{Cap}$ is the logarithmic capacity.

We recall that for general energies conditions \eqref{E1}--\eqref{E2} are only necessary for minimality. For convex energies they are however also 
sufficient, and hence equivalent, to minimality. In our case, since $I^\kappa$ is strictly convex, the unique minimiser of $I^\kappa$ is the only measure satisfying \eqref{E1}--\eqref{E2}
for some constant $C$.
 
For the derivation of the Euler-Lagrange conditions see, e.g., \cite[Theorem~1.3]{ST}.

\subsection{The first Euler-Lagrange condition \eqref{E1}}\label{sect:EL11}
We show in this section that under the assumptions of Theorem~\ref{te} there exists an ellipse with interior $E=E(a,b,\varphi)$, defined as in \eqref{ellrot}, such that the potential $P^\kappa(\mu)$ of the probability measure
$\mu= \chi_E/|E|$ (see \eqref{pot}) is constant on $E$.

\subsubsection{Computing the Hessian of $P^\kappa$ on $E$}\label{subsub:H} We note that condition \eqref{E1} for $\mu=\chi_E / |E|$ is equivalent of the vanishing on $E$ of the Hessian $H(P^\kappa(\mu))$ of $P^\kappa(\mu)$.

To see this, we first observe that the potential $P^\kappa$ of $\chi_E / |E|$  is of class $C^1$ with Lipschitz continuous gradient. Hence, finding an ellipse $E$ such that $\mu=\chi_E / |E|$ satisfies \eqref{E1} is equivalent to finding $E$ such that $\nabla P^\kappa(\mu)=0$ on $E$. Since $P^\kappa(\mu)$ is an  even function,
$\nabla P^\kappa(\mu)$ is odd and thus it vanishes at the origin. Consequently $\nabla P^\kappa(\mu)=0$ on $E$ is equivalent to the vanishing in $E$ of
the Hessian $H(P^\kappa(\mu))$ of $P^\kappa(\mu)$.  

We now compute the Hessian of $P^\kappa(\mu)$ in $E$. To this aim we will 
use Lemma~\ref{constel}, in dimension $d=2$, with kernel $H=\partial_{ij} W^\kappa$ (in fact separately with $H=\partial_{ij} W^0$ and $H=\partial_{ij} \kappa$).

As a first step, we compute the second derivatives of the terms in $W^\kappa$. 
Since the kernel $\kappa$ is homogeneous of degree $0$ and of class $C^3$  off the origin, each second order derivative $\partial_{ij} \kappa$ of $\kappa$
in the  sense of distributions is a constant multiple of the Dirac delta at the origin plus the principal value distribution 
associated with the kernel $\partial_{ij} \kappa$. More precisely
\begin{equation}\label{sod1}
\partial_{11} \kappa = \left( \int_{|\xi|=1} \xi_1 \,\partial_1  \kappa(\xi) \, d\sigma(\xi)\right) \delta_0 + \pv\partial_{11} \kappa,
\end{equation}
\begin{equation}\label{sod2}
\partial_{22} \kappa = \left( \int_{|\xi|=1} \xi_2 \,\partial_2  \kappa(\xi) \, d\sigma(\xi)\right) \delta_0 + \pv\partial_{22} \kappa,
\end{equation}
\begin{equation}\label{sod3}
\partial_{12} \kappa = \left( \int_{|\xi|=1} \xi_1 \,\partial_2  \kappa(\xi) \, d\sigma(\xi)\right) \delta_0 + \pv\partial_{12} \kappa.
\end{equation}
These formulas follow from checking the action of the left-hand sides on a test function and applying integration by parts via
Green-Stokes.

Arguing as in the proof of \eqref{Gi0} in Lemma~\ref{tangcont}, one can show that 
\begin{equation}\label{kij0}
\int_{|\xi|=1} \partial_{ij} \kappa(\xi) \, d\sigma(\xi) =0, \quad i,j=1, 2. 
\end{equation}
Since $\kappa$ is even, $\partial_{ij}\kappa$ is the kernel of an even homogeneous Calder\'on-Zygmund 
operator to which one can apply Lemma~\ref{constel}. Therefore the distributional second order derivatives of $\kappa \star \chi_E$ 
are constant on $E$. This shows the relevance of the assumption that $\kappa$ is even in Theorem~\ref{te}.

Identities \eqref{sod1}, \eqref{sod2}, and \eqref{sod3} apply also to the kernel $-\log|z|$, which, in fact, may be thought of 
as being homogeneous of degree $0$. In this case the constant multiple of $\delta_0$ can be computed explicitly, since 
$$
\int_{|\xi|=1} \xi_1 \,\partial_1(\log|\xi|) \, d\sigma(\xi)=\int_{|\xi|=1} \frac{\xi_1^2}{|\xi|^2} \, d\sigma(\xi) = \int_0^{2\pi} \cos^2\theta \,d\theta = \pi,
$$
and similarly
$$
\int_{|\xi|=1} \xi_2 \,\partial_2(\log|\xi|) \, d\sigma(\xi)= \pi, \qquad \int_{|\xi|=1} \xi_1 \,\partial_2(\log|\xi|) \, d\sigma(\xi)=0.
$$
Hence we obtain 
\begin{equation}\label{sod1log}
\partial_{11}\left(-\frac{1}{\pi}\log|z|\right) = -\delta_0 + \frac{1}{\pi}\, \pv \frac{x^2-y^2}{|z|^4},
\end{equation}
\begin{equation}\label{sod2log}
\partial_{22}\left(-\frac{1}{\pi}\log|z|\right) = -\delta_0 - \frac{1}{\pi}\, \pv \frac{x^2-y^2}{|z|^4},
\end{equation}
\begin{equation}\label{sod3log}
\hspace{-1.9cm}\partial_{12}\left(-\frac{1}{\pi}\log|z|\right) =  \frac{1}{\pi}\, \pv \frac{2xy}{|z|^4}.
\end{equation}
Moreover, \eqref{kij0} still holds true when $\kappa$ is replaced by $-\log|\cdot|$. Therefore the distributional second order derivatives of $-\log|\cdot| \star \chi_E$ are constant on $E$.

Since $P^\kappa(\frac{\chi_E}{|E|}) = -\log|\cdot| \star \frac{\chi_E}{|E|}+ \kappa\star \frac{\chi_E}{|E|}+\frac12|\cdot|^2$, we have then proved that every second order derivative of $P^\kappa(\frac{\chi_E}{|E|})$ is constant 
in $E$.

\subsubsection{Imposing that the Hessian of $P^\kappa$ is zero in $E$}\label{subsub:sys} The first Euler-Lagrange condition \eqref{E1} is equivalent to the vanishing of the Hessian of $P^\kappa$ in $E$. Since the Hessian is a symmetric matrix, 
requiring that it vanishes on $E$ yields a system of three equations in the parameters $a,b$ and $\varphi$. In this section we write this system explicitly. In Section~\ref{subsub:IFT} we will show that this system is uniquely solvable under the hypotheses of Theorem~\ref{te}, and hence that there exists a unique ellipse $E$ with semi-axes $a$ and $b$, rotated of an angle $\varphi$ with respect to the $x$-axis, such that the associated $P^\kappa(\frac{\chi_E}{|E|})$ satisfies \eqref{E1}.

The vanishing of the Hessian of $P^\kappa(\frac{\chi_E}{|E|})$ in $E$ corresponds to the three equations
\begin{equation}\label{3eq}
 \partial_{11}P^\kappa\left(\frac{\chi_E}{|E|}\right) = 0, \quad \partial_{22}P^\kappa\left(\frac{\chi_E}{|E|}\right) = 0, \quad \text{and} \quad \partial_{12}P^\kappa\left(\frac{\chi_E}{|E|}\right) = 0 \quad \text{in } E.
\end{equation}
So far, in Section~\ref{subsub:H} we have shown that every second order derivative of $P^\kappa(\frac{\chi_E}{|E|})$ is constant 
in $E$, and so we now need to show that the constant value of every second order derivative of $P^\kappa$ in $E$ is in fact zero. 

To make the three equations \eqref{3eq} as explicit as possible we need to know the constant value on $E=E(a,b,\varphi)$ 
of $\pv \frac{x^2-y^2}{|z|^4}\star
\chi_E$ and of $\pv \frac{2xy}{|z|^4}\star
\chi_E$, from \eqref{sod1log}--\eqref{sod3log}. One could resort to \eqref{constell}, but, setting $E_0=E(a,b,0),$ it is faster to appeal to the well-known formula 
(see \cite[page 1408]{HMV})
\begin{equation}\label{f:HMV}
 \left(\frac{1}{\pi z}\star \chi_{E_0}\right)(z)= \bar{z}-\lambda z \quad \text{for }z \in E_0, \quad \lambda=\frac{a-b}{a+b}.
\end{equation}
Changing variables to pass from $E$ to $E_0$, and denoting $E=Q(E_0)$, with $Q=e^{i\varphi}$, we have that, by using \eqref{f:HMV},
\begin{align*}
\left(\frac{1}{\pi z}\star \chi_{E}\right)(z) &= \frac{1}{\pi}\int_{E_0}\frac{1}{z-e^{i\varphi}w} \,dw =  e^{-i\varphi}\left(\frac{1}{\pi}\int_{E_0}\frac{1}{e^{-i\varphi}z-w} \,dw\right)\\
&= e^{-i\varphi}\big(\overline{e^{-i\varphi}z}-\lambda e^{-i\varphi}z\big) = e^{-i\varphi}\big(e^{i\varphi}\bar{z}-\lambda e^{-i\varphi}z\big)\\
&=\bar z - \lambda e^{-2i\varphi}z \quad \text{for }z\in E.
\end{align*}
Differentiating in $z$ we obtain 
\begin{equation*}
\pv\left(\frac{1}{\pi z^2}\star \chi_{E}\right)(z)= \lambda e^{-i2\varphi}, \quad z \in E,
\end{equation*}
and taking real parts and imaginary parts we get respectively
\begin{equation}\label{realBeurlingE2}
\pv \bigg(\frac{1}{\pi} \frac{x^2-y^2}{|z|^4}\star \chi_{E}\bigg)(z)= \lambda \cos(2\varphi), \quad z \in E
\end{equation}
and
\begin{equation}\label{imBeurlingE2}
\pv \left(\frac{1}{\pi} \frac{2xy}{|z|^4}\star \chi_{E}\right)(z)= \lambda \sin(2\varphi), \quad z \in E.
\end{equation}

We are now ready to rewrite the system \eqref{3eq} more explicitly. For convenience, we use the variables $(p,\lambda, \varphi)$, with $p=ab$, instead of $(a,b,\varphi)$. (Alternatively, one could consider $(p,q,\varphi)$, with $q=\frac{a}{b}$, observing that $q$ can be obtained from $\lambda$ as $q=\frac{1+\lambda}{1-\lambda}$.)

We start with the first equation in \eqref{3eq}, namely $\partial_{11} P^\kappa(\frac{\chi_E}{|E|})=0$ on $E$. By \eqref{sod1log}, \eqref{realBeurlingE2}, \eqref{sod1} (and multiplying the equation by $p$) we have that, for $z\in E$, 
\begin{equation}\label{eq0}
p -1 +\cos(2\varphi) \lambda + \frac1\pi \partial_{11} \kappa\star \chi_E(z) + I_1(\kappa)=0,
\end{equation}
where 
$$
I_1(\kappa)=  \frac{1}{\pi} \int_{|\xi|=1} \xi_1 \,\partial_1  \kappa(\xi) \, d\sigma(\xi).
$$
We compute the convolution term in \eqref{eq0} by using Lemma~\ref{constel} and Corollary~\ref{coro}, since $\partial_{11} \kappa$ is admissible as kernel $H$ (also thanks to \eqref{kij0}): for $z\in E$,
\begin{align*}
\frac1\pi \partial_{11} \kappa\star \chi_E(z)&=  -\frac{1}{2\pi}\int_{|\xi|=1} \log \bigg(\frac{{\langle \xi, e^{i \varphi} \rangle}^2}{a^2}+
 \frac{{\langle \xi, i e^{i \varphi} \rangle}^2}{b^2}\bigg) \partial_{11} \kappa(\xi) \, d\sigma(\xi)\\
 &= -\frac{1}{2\pi}\int_{|\xi|=1}\bigg(\log\left(\frac{1}{a^2}\right) +  \log \left({\langle \xi, e^{i \varphi} \rangle}^2+q^2
{\langle \xi, i e^{i \varphi} \rangle}^2\right)\bigg) \partial_{11} \kappa(\xi) \, d\sigma(\xi)\\
&=-\frac{1}{2\pi}\int_{|\xi|=1}  \log \left({\langle \xi, e^{i \varphi} \rangle}^2+q^2
{\langle \xi, i e^{i \varphi} \rangle}^2\right) \partial_{11} \kappa(\xi) \, d\sigma(\xi).
\end{align*}
Hence \eqref{eq0} becomes 
\begin{equation}\label{eq1}
p -1 +\cos(2\varphi) \lambda + F_1(\lambda, \varphi,\kappa) + I_1(\kappa)=0,
\end{equation}
where
\begin{equation}\label{efe1}
F_1(\lambda,\varphi,\kappa) = -\frac{1}{2\pi} \int_{|\xi|=1} \log\left({\langle \xi, e^{i\varphi} \rangle}^2+ 
q^2  {\langle \xi,i e^{i\varphi} \rangle}^2 \right)\,\partial_{11} \kappa(\xi)\,d\sigma(\xi).
\end{equation}
By the same token the second equation in \eqref{3eq} times $p$ is
\begin{equation}\label{eq2}
p -1 -\cos(2 \varphi) \lambda  +F_2(\lambda, \varphi,\kappa)+ I_2(\kappa)=0,
\end{equation}
where
\begin{equation}\label{efe2}
F_2(\lambda,\varphi,\kappa) = -\frac{1}{2\pi} \int_{|\xi|=1} \log\left({\langle \xi, e^{i\varphi} \rangle}^2+ 
q^2  {\langle \xi,i e^{i\varphi} \rangle}^2 \right)\,\partial_{22} \kappa(\xi)\,d\sigma(\xi),
\end{equation}
and
$$
I_2(\kappa)= \frac{1}{\pi}  \int_{|\xi|=1} \xi_2 \,\partial_2  \kappa(\xi) \, d\sigma(\xi).
$$
Finally, the third equation in \eqref{3eq} times $p$ is
\begin{equation}\label{eq3}
 \sin(2\varphi) \lambda + F_3(\lambda,\varphi,\kappa)+ I_3(\kappa)=0,
\end{equation}
where
\begin{equation}\label{efe3}
F_3(\lambda,\varphi,\kappa) = -\frac{1}{2\pi} \int_{|\xi|=1} \log\left({\langle \xi, e^{i\varphi} \rangle}^2+ 
q^2  {\langle \xi,i e^{i\varphi} \rangle}^2 \right)\,\partial_{12} \kappa(\xi)\,d\sigma(\xi),
\end{equation}
and
$$
I_3(\kappa)= \frac{1}{\pi}  \int_{|\xi|=1} \xi_1 \,\partial_2  \kappa(\xi) \, d\sigma(\xi).
$$
In conclusion, the system \eqref{3eq} is equivalent to the three equations \eqref{eq1}, \eqref{eq2}, and \eqref{eq3}, in the three unknowns $(p,\lambda,\varphi)$, namely 
\begin{equation}\label{systemaaa}
\begin{cases}
p -1 +\cos(2\varphi) \lambda+ F_1(\lambda, \varphi,\kappa) + I_1(\kappa)=0,\\
p -1 -\cos(2 \varphi) \lambda  +F_2(\lambda, \varphi,\kappa)+ I_2(\kappa)=0,\\
\sin(2\varphi) \lambda+ F_3(\lambda,\varphi,\kappa) + I_3(\kappa)=0.
\end{cases}
\end{equation}

\subsubsection{Solving \eqref{systemaaa} via the Implicit Function Theorem}\label{subsub:IFT}

We want to show that the system \eqref{systemaaa} admits a (unique) solution, at least for $\kappa$ small enough in $C^3$ norm (see assumption~\eqref{small} in Theorem~\ref{te}). The idea is to use the Implicit Function Theorem to find, for $\kappa$ small, a solution of  \eqref{systemaaa} `close' to the solution for $\kappa=0$. 

To explain our strategy let us first consider the system
\begin{equation}\label{newsystem}
\mathcal L(p,\lambda, \varphi) : = (p -1 +\cos(2\varphi) \lambda, \,p -1 -\cos(2\varphi) \lambda,\, \sin(2\varphi) \lambda)=0.
\end{equation}
Note that \eqref{newsystem} is the `limiting' system for \eqref{systemaaa}. Indeed the quantities $I_j$ and $F_j$, $j=1,2,3$, in \eqref{systemaaa} are small owing to the smallness assumption on $\kappa$. 

Since the minimiser of $I^0$ in \eqref{en0} (corresponding to $\kappa=0$) is the normalised characteristic function of the unit disc, 
one would like to examine  \eqref{systemaaa} and \eqref{newsystem}  for $(p, \lambda)$ close to $(1,0).$ We have $\mathcal L(1,0,\varphi)=0$ for each 
angle $\varphi,$ which is consistent with the fact that the support of the minimiser is a disc. Unfortunately, the fact that the last column of the gradient of $\mathcal L$ at $(1,0,\varphi)$ vanishes identically prevents us from applying directly the Inverse Function Theorem. To overcome this difficulty we need to examine more carefully the three equations in \eqref{systemaaa}. This careful analysis will in particular identify the rotation angle of the ellipse solution of \eqref{systemaaa}, which is by now undetermined due to the isotropy of the disc.

As a first step, in the next lemma we compute the integral $I_3(\kappa)$ in terms of the Fourier coefficients of $\kappa$. Indeed, we recall that in Section~\ref{FT}, in view of the Fourier series expansion of $\kappa(e^{i\theta}),$ we concluded that
\begin{equation}\label{foudins2}
\kappa(z) = \sum_{n=0}^\infty  a_n \frac{\re{z^{2n}}}{|z|^{2n}} + b_n \frac{\im{z^{2n}}}{|z|^{2n}}, \quad z \in \C, \quad z \neq 0.
\end{equation}

\begin{lemma}\label{bone}
We have
$$
I_3(\kappa)= \frac{1}{\pi} \int_{|\xi|=1} \xi_1 \, \partial_2  \kappa(\xi) \, d\sigma(\xi) =  b_1.
$$
\end{lemma}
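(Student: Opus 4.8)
The plan is to compute the integral $I_3(\kappa) = \frac{1}{\pi} \int_{|\xi|=1} \xi_1 \, \partial_2 \kappa(\xi) \, d\sigma(\xi)$ directly, using the explicit Fourier-type expansion \eqref{foudins2} of $\kappa$ and exploiting orthogonality on the unit circle. First I would note that $\xi_1 = \cos\theta$ and $d\sigma(\xi) = d\theta$ on $|\xi|=1$, and that $\partial_2 \kappa(\xi)$ must be expressed in terms of the angular variable. The cleanest route is to write each building block $\re{z^{2n}}/|z|^{2n} = \cos(2n\theta)$ and $\im{z^{2n}}/|z|^{2n} = \sin(2n\theta)$ on the circle, but to differentiate I need the full (zero-homogeneous) expression in $x,y$ before restricting. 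Since each term is $0$-homogeneous, I can use Euler's identity: for a $0$-homogeneous function $g$, one has $x\,\partial_1 g + y\,\partial_2 g = 0$, so on $|\xi|=1$ the radial derivative vanishes and $\partial_2 g$ restricted to the circle is governed entirely by the angular derivative. Concretely, writing $g(re^{i\theta}) = h(\theta)$, the chain rule gives $\partial_2 g = (\partial_2 \theta)\, h'(\theta) = \frac{\cos\theta}{r} h'(\theta)$ on $|\xi|=1$ (using $\theta = \arctan(y/x)$, $\partial_2\theta = x/r^2 = \cos\theta / r$), so $\partial_2 g(\xi) = \cos\theta\, h'(\theta)$ for $|\xi|=1$.

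Applying this to \eqref{foudins2}, with $h(\theta) = \sum_n a_n \cos(2n\theta) + b_n\sin(2n\theta)$, we get $h'(\theta) = \sum_n -2n\,a_n\sin(2n\theta) + 2n\,b_n\cos(2n\theta)$, hence on the unit circle
\begin{equation*}
\xi_1\,\partial_2\kappa(\xi) = \cos^2\theta \sum_{n=1}^\infty \big(-2n\,a_n\sin(2n\theta) + 2n\,b_n\cos(2n\theta)\big).
\end{equation*}
Then I would integrate over $\theta \in (0,2\pi)$. Using $\cos^2\theta = \tfrac12 + \tfrac12\cos(2\theta)$, the term $-2n\,a_n\sin(2n\theta)\cos^2\theta$ integrates to zero for every $n$ (odd function contributions), and $2n\,b_n\cos(2n\theta)\cos^2\theta = n\,b_n\cos(2n\theta) + n\,b_n\cos(2n\theta)\cos(2\theta)$ integrates to zero unless $n=1$, in which case $\int_0^{2\pi} \cos^2(2\theta)\,d\theta = \pi$ from the second piece (the first piece $\int_0^{2\pi}\cos(2\theta)\,d\theta = 0$). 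This gives $\int_0^{2\pi}\xi_1\,\partial_2\kappa(\xi)\,d\theta = b_1\cdot 1 \cdot \pi = \pi b_1$, hence $I_3(\kappa) = \frac{1}{\pi}\cdot \pi b_1 = b_1$.

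The only genuine subtlety — the step I would be most careful about — is justifying the termwise differentiation and integration of the series \eqref{foudins2}: since $\kappa \in C^3$ off the origin, its restriction to the circle is $C^3$, so the Fourier coefficients decay like $n^{-3}$ and the differentiated series $\sum n\,(|a_n|+|b_n|)$ converges absolutely; this legitimises interchanging $\sum$ with $\partial_2$ and with $\int$. Alternatively one can avoid series manipulation entirely by an integration-by-parts argument directly on $\int_{|\xi|=1}\xi_1\partial_2\kappa$, reducing it to an expression that only sees the first angular mode of $\kappa$, but the Fourier approach is the most transparent and matches the paper's setup in Section~\ref{FT}. Everything else is a routine orthogonality computation.
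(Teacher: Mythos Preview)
Your argument is correct. Both proofs ultimately exploit the Fourier expansion \eqref{foudins2} and orthogonality on the circle, but the route is genuinely different from the paper's. The paper passes to complex coordinates, writing $\xi_1\partial_2\kappa = \im\big((z+\bar z)\,\overline\partial\kappa\big)$, computes $\overline\partial\kappa$ term by term from the identities $\re z^{2n}/|z|^{2n}=\tfrac12(z^n/\bar z^n+\bar z^n/z^n)$, and then integrates $z\,\overline\partial\kappa$ and $\bar z\,\overline\partial\kappa$ over $|z|=1$, where only the $n=1$ term survives and yields $a_1+ib_1$. You instead stay in real polar coordinates and use the zero-homogeneity of $\kappa$ to write $\partial_2\kappa(\xi)=\cos\theta\,h'(\theta)$ on $|\xi|=1$, reducing the problem to the elementary trigonometric integral $\int_0^{2\pi}\cos^2\theta\,h'(\theta)\,d\theta$. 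Your approach is slightly more self-contained (no complex differentiation), while the paper's complex-variable computation has the side benefit of simultaneously delivering $\frac{1}{\pi}\int_{|z|=1} z\,\overline\partial\kappa = a_1+ib_1$, from which one can also read off $I_1-I_2$ if needed. Your remark on the absolute convergence of $\sum n(|a_n|+|b_n|)$ (via $\kappa\in C^3(\mathbb T)$) is the right justification for the termwise manipulations, and applies equally to the paper's calculation.
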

\begin{proof}
It is more convenient to perform the calculation in the complex variables $z$ and $\bar{z}.$ We have
 \begin{equation*}
  I_3 = \im \frac{1}{\pi}  \int_{|z|=1} z\, \overline{\partial} \kappa(z) \, d\sigma(z)
  + \im \frac{1}{\pi}  \int_{|z|=1} \bar{z}\, \overline{\partial} \kappa (z) \, d\sigma(z).
 \end{equation*}
To compute $\overline{\partial} \kappa$ set
\begin{equation*}
\frac{\re{z^{2n}}}{|z|^{2n}} = \frac{1}{2}\left(\frac{z^n}{\bar{z}^n}+\frac{\bar{z}^n}{z^n}\right) \quad \quad \text{and} \quad \quad  \frac{\im{z^{2n}}}{|z|^{2n}}= \frac{1}{2i}\left(\frac{z^n}{\bar{z}^n}-\frac{\bar{z}^n}{z^n}\right) 
\end{equation*} 
and then take $\overline{\partial}$ to obtain
\begin{equation}\label{dbark}
\overline{\partial} \kappa(z) = \frac12  \sum_{n=1}^\infty  \left(n a_n \left(-\frac{z^n}{\bar{z}^{n+1}}+\frac{\bar{z}^{n-1}}{z^{n}}\right) - \frac{n b_n}{i} \left(\frac{z^n}{\bar{z}^{n+1}}+\frac{\bar{z}^{n-1}}{z^{n}}\right)\right), \quad z \in \C, \quad z \neq 0.
\end{equation}
Hence we have that, for $z\neq 0$,
\begin{align*}
z\overline{\partial} \kappa(z) &= \frac12  \sum_{n=1}^\infty  \left(n a_n \left(-\frac{z^{2n+2}}{|z|^{2n+2}}+\frac{\bar{z}^{2n-2}}{|z|^{2n-2}}\right) - \frac{n b_n}{i} \left(\frac{z^{2n+2}}{|z|^{2n+2}}+\frac{\bar{z}^{2n-2}}{|z|^{2n-2}}\right)\right),\\
\bar z\overline{\partial} \kappa(z) &= \frac12  \sum_{n=1}^\infty  \left(n a_n \left(-\frac{z^{2n}}{|z|^{2n}}+\frac{\bar{z}^{2n}}{|z|^{2n}}\right) - \frac{n b_n}{i} \left(\frac{z^{2n}}{|z|^{2n}}+\frac{\bar{z}^{2n}}{|z|^{2n}}\right)\right).
\end{align*}
By integrating $z\overline{\partial} \kappa(z)$ and $\bar z\overline{\partial} \kappa(z)$ on the unit circle one can easily see that all the frequencies $n\neq 1$ yield a zero integral, and that 
\begin{equation*}
\frac{1}{\pi}  \int_{|z|=1}  z \, \overline{\partial} \kappa(z)\,d\sigma(z) =  a_1+ib_1\quad\quad \text{and}\quad\quad 
\frac{1}{\pi} \int_{|z|=1}  \bar{z} \, \overline{\partial} \kappa(z)\,d\sigma(z) = 0.
\end{equation*} 
\end{proof}

In view of Lemma~\ref{bone}, system \eqref{systemaaa} becomes 
\begin{equation}\label{systemaab}
\begin{cases}
p -1 +\cos(2\varphi) \lambda+ F_1(\lambda, \varphi,\kappa) + I_1(\kappa)=0,\\
p -1 -\cos(2 \varphi) \lambda  +F_2(\lambda, \varphi,\kappa) + I_2(\kappa)=0,\\
\sin(2\varphi) \lambda+ F_3(\lambda,\varphi,\kappa) + b_1=0.
\end{cases}
\end{equation} 
We now show that, up to a rotation of the axes, we can always assume that $b_1=0$. Consider the change of coordinates $w=(u,v)=z e^{i \psi}$, with the angle $\psi$ to be fixed later (see \eqref{angle}). In these new coordinates, from \eqref{foudins2} we have that 
\begin{align*}
\tilde \kappa(w):= \kappa(z(w)) = \sum_{n=0}^\infty A_n\frac{\re{w^{2n}}}{|w|^{2n}}
+ B_n\frac{\im{w^{2n}}}{|w|^{2n}},
\end{align*}
where
$$
A_n = a_n \cos(2n\psi)-b_n\sin(2n\psi), \quad B_n = a_n \sin(2n\psi)+b_n\cos(2n\psi).
$$
On the other hand, $|z|=|w|$, and hence the logarithmic and the confinement terms in the potential $P^\kappa(\chi_E/|E|)$ are unchanged. 
By choosing the rotation angle $\psi$ so that
\begin{equation}\label{angle}
a_1\sin(2\psi)+b_1\cos(2\psi)=0
\end{equation} 
we get $B_1=0,$ which means that in the rotated variables $w=(u,v)$ there will be no term $I_3(\kappa)$ in the third equation in \eqref{systemaaa} (or \eqref{systemaab}).

This is not yet the angle to which the statement of Theorem~\ref{te} refers. Once we find the angle $\tilde\varphi$ that the candidate ellipse in the $(u,v)$ coordinate system forms with the $u$-axis, the angle $\varphi$ of Theorem~\ref{te} is obtained as $\varphi=\tilde\varphi-\psi$ (see also Remark~\ref{rem:psi}).  

We then assume from now on that $b_1=0$, and so the third equation in \eqref{systemaab} is
$$
\sin(2\varphi) \lambda + F_3(\lambda,\varphi,\kappa) = 0,
$$
with $F_3$  given by \eqref{efe3}. 

The system \eqref{systemaab}, with $b_1=0$, can then be written as 
\begin{equation}\label{sys:L}
L(p,\lambda,\varphi,\kappa)=0, 
\end{equation}
where the components of $L=(L_1,L_2,L_3)$ are given by 
\begin{equation*}
L_1(p,\lambda, \varphi, \kappa):= p -1 +\cos(2\varphi) \lambda + F_1(\lambda, \varphi, \kappa) + I_1(\kappa),
\end{equation*} 
\begin{equation*}
L_2(p,\lambda, \varphi, \kappa):= p -1 -\cos(2\varphi) \lambda + F_2(\lambda, \varphi, \kappa) + I_2(\kappa),
\end{equation*} 
and
\begin{equation*}
L_3(p,\lambda, \varphi, \kappa):= \sin(2\varphi)\lambda +  F_3(\lambda,\varphi, \kappa).
\end{equation*} 
Note that $L(1,0,0,0)=0$. Hence, for $\kappa$ small we look for a solution $(p,\lambda,\varphi)$ of the system \eqref{sys:L} close to $(1,0,0)$. 

Unfortunately, also the  system \eqref{sys:L} is not suitable for the application of the Inverse Function Theorem, since $\frac{\partial L}{\partial \varphi}(1,0,0,0)=0$. Indeed, 
$F_j(0, \varphi,\kappa)=0$ for every $\varphi \in \mathbb{R}$ and $\kappa\in C^2(\mathbb T)$, $j=1,2,3$. This follows from the fact that $\lambda =0$ means that $a=b$ and so $q=1$ and, for $|\xi|=1$
$$
\log\left({\langle \xi, e^{i\varphi} \rangle}^2+ 
q^2  {\langle \xi,i e^{i\varphi} \rangle}^2 \right) = \log |\xi|^2=0.
$$ 
Consequently,
\begin{equation}\label{a}
\frac{\partial F_j}{\partial \varphi}(0,\varphi,\kappa)=0, \quad j=1,2,3.
\end{equation} 

We then modify the  system \eqref{sys:L} slightly by dividing the third equation ($L_3=0$) by $\lambda$. More precisely, we consider the system given by
\begin{equation}\label{sys}
G(p,\lambda, \varphi, \kappa)=0
\end{equation} 
where $G=(G_1,G_2,G_3)$ is given in components by
\begin{equation*}
G_1(p,\lambda, \varphi,  \kappa):= L_1(p,\lambda, \varphi, \kappa),
\qquad
G_2(p,\lambda, \varphi,  \kappa):= L_2(p,\lambda, \varphi, \kappa),
\end{equation*} 
and
\begin{equation*}
G_3(p,\lambda, \varphi,  \kappa):=  
\begin{cases} 
\medskip
\sin(2\varphi) + \dfrac{F_3(\lambda,\varphi,  \kappa)}{\lambda} &\quad \textrm{if } \lambda\neq 0,\\
\sin(2\varphi) + \dfrac{\partial F_3}{\partial\lambda} (0,\varphi,  \kappa) &\quad \textrm{if } \lambda= 0.
\end{cases}
\end{equation*} 
We claim that 
\begin{equation*}
G(1,0, 0, 0) =0.
\end{equation*} 
Since we have already proved that $L(1,0,0)=0$, and the two systems only differ in their third component, it only remains to prove that $G_3(1,0,0,0)=0$. This can be readily seen using the fact that $F_3(\lambda,\varphi,0)=0$ for every $\lambda\in(-1,1)$, $\varphi\in\R$.

On the other hand the Jacobian matrix of $G$ with respect to the variables $p, \lambda$ and $\varphi$ at $(1,0,0,0)$ is
\begin{equation}\label{jac0}
 \frac{ \partial G}{\partial (p,  \lambda,  \varphi)}(1,0,0,0)=
\left( \begin{matrix}
 1 & 1 &0 \\
 1& -1 & 0 \\
 0 & 0 & 2
 \end{matrix}
 \right),
\end{equation} 
hence it is invertible. By the Implicit Function Theorem in Banach spaces (see, e.g., \cite[statement (10.2.1)]{D}) there exist $p(\kappa)$, $\lambda(\kappa)$, $\varphi(\kappa)$
satisfying the system \eqref{sys} for $\kappa$ close to zero in the $C^2$-norm on $|\xi|=1$.  Note that the functions $G$ and $ \frac{\partial G}{\partial (p,  \lambda,  \varphi)}$ are clearly continuous in all the variables $(p,\lambda,\varphi,\kappa)$, where $(p,\lambda,\varphi)\in (0,+\infty)\times (-1,1)\times \R$, and $\kappa$ belongs to the Banach space of $C^2$-functions on the unit sphere equipped with the $C^2$-norm. This is in fact sufficient for our conclusion, and we need not prove that $G$ is continuously differentiable in all the variables.

As observed above, $p$ and $\lambda$ determine the semi-axes $a$ and $b$ of the ellipse. The angle $\varphi$ we have obtained here is in fact the angle $\tilde \varphi$ of the rotation of the ellipse with respect to the coordinate frame $w=(u,v)$. Note that $a$ and $b$ are close to $1$ and $\tilde \varphi$ is close to $0$. Coming back from the $(u,v)$-plane to the original frame our ellipse has semi-axes close to $1$ and a clockwise rotation angle $\varphi = \tilde{\varphi}-\psi$, hence close to the angle $-\psi$ defined in \eqref{angle} (see also Remark~\ref{rem:psi}). Alternatively, looking from the perspective of the original $(x,y)$-plane, we have rotated the ellipse of an angle $\psi$ counterclockwise and then of an angle $\tilde \varphi$ clockwise. 
In conclusion, we have then found an ellipse such that the potential of the normalised characteristic function of its interior satisfies the first Euler-Lagrange equation \eqref{E1}.

\begin{remark}[Special case: $\kappa$ even in each variable]\label{rem:sep-even} 
The proof becomes shorter if the kernel $\kappa$ is even in each variable separately. If this is the case, $\partial_2 \kappa$ is odd in $y$ and so $\partial_{12}\kappa$ is odd in $y$ too. Thus 
$\pv \int_E \partial_{12}\kappa (z) d z=0$ for the interior $E$ of each ellipse centred at the origin with axes on the 
coordinate axes, 
by Fubini (fixing $x$ and integrating in $y$). Then the constant value of $\pv \partial_{12}\kappa \star \chi_E$ on $E$ is $0,$ provided we look only at ellipses with $\varphi=0$. Moreover, the factor of $\delta_0$ in \eqref{sod3} is also zero.  Then the third equation in \eqref{3eq} reduces to 
$$
\partial_{12}\left(-\log|z|\right)\star \frac{\chi_{E}}{|E|} =0  \quad \text{on } E,
$$
which by \eqref{sod3log} and \eqref{imBeurlingE2} is satisfied, since $\varphi=0$.

Therefore the system \eqref{3eq} is equivalent to the two conditions 
$$\partial_{11}P^\kappa(\chi_E/|E|)=0, \qquad \partial_{22}P^\kappa(\chi_E/|E|)=0$$ 
in the two unknowns $(p,\lambda)$ (and $\varphi=0$), namely to the first two equations in the system \eqref{systemaab}, with $\varphi=0$:
\begin{equation*}
\begin{cases}
p -1 +  \lambda + F_1(\lambda, 0, \kappa)+ I_1(\kappa)=0, \\
p -1 -\lambda + F_2(\lambda, 0,\kappa)+I_2(\kappa)=0,
\end{cases}
\end{equation*} 
We can immediately apply the Implicit Function Theorem to the system above, which gives $p$ and $\lambda$ in terms of $\kappa$. 
\end{remark}

\begin{remark}[The angle $-\psi$]\label{rem:psi}
The angle $-\psi$ is the rotation angle with respect to the $x$-axis of the minimising ellipse corresponding to the kernel
\begin{equation}\label{psi-per}
-\log|z|+a_0+b_0+a_1 \frac{\re{z^{2}}}{|z|^{2}} + b_1 \frac{\operatorname{Im}{z^{2}}}{|z|^{2}}, \quad z \in \C, \quad z \neq 0,
\end{equation}
for $a_1,b_1$ small enough. In \eqref{psi-per} the perturbation is given by keeping only the first two terms in the Fourier expansion \eqref{foudins} of $\kappa$. 

Indeed, by \cite[Section~4.2]{CMMRSV}, if $a_1^2+b_1^2<1/4$ and $b_1\neq0$, the minimiser for the kernel \eqref{psi-per} is the normalised characteristic function of an ellipse, whose major axis is rotated with respect to the $x$-axis of an angle $\theta$ satisfying
$$
\tan\theta=-\frac{a_1+\sqrt{a_1^2+b_1^2}}{b_1}.
$$
It is immediate to check that the solutions of \eqref{angle} for $b_1\neq0$ satisfy
$$
\tan\psi=\frac{a_1\pm\sqrt{a_1^2+b_1^2}}{b_1},
$$
hence, up to integer multiples of $\pi$, either $\psi=-\theta$ or $\psi=-\theta+\pi/2$ (the rotation angle of the minor axis of the ellipse).
\end{remark}

\subsection{The second Euler-Lagrange condition \eqref{E2}}\label{sect:EL22} Let $E^\kappa=E(a^\kappa,b^\kappa,\varphi^\kappa)$ denote the interior of the ellipse of the type \eqref{ellrot} obtained as the solution of the first Euler-Lagrange condition in Section~\ref{sect:EL11}. We recall that for $\kappa=0$ the unique minimiser of $I^0$ is the normalised characteristic function of the unit disc. Hence we have that $E^0$ is the closed unit ball $B(0,1)$.

In this section we prove that $E^\kappa$ also satisfies the second Euler-Lagrange condition~\eqref{E2}. 
This will conclude the characterisation of the unique minimiser of the energy $I^\kappa$ in \eqref{en0}, 
with $\kappa$ as in the statement of Theorem~\ref{te}, as the normalised characteristic function of~$E^\kappa$.

The strategy of proof is quite simple. First we note that for $\kappa=0$ the Euler-Lagrange condition \eqref{E2} for $P^0$ is satisfied with a strict inequality outside $B(0,1)$. Then, since for small $\kappa$ we have that $E^\kappa$ is close to $B(0,1)$, we deduce that condition \eqref{E2} for $P^\kappa$ is satisfied, for $\kappa\neq 0$ small enough, in a neighbourhood of $B(0,1)$ -- a security region. We can in fact prove that the security region is uniform in $\kappa$ under the assumption that  the smallness of $\kappa$ is controlled in the $C^3$-norm. Finally, this shows \eqref{E2}.

\subsubsection{Subharmonicity of the potentials}\label{sect:subh}
For brevity we denote with $P^\kappa$ the potential of $\chi_{E^\kappa}/|E^\kappa|$ defined as in \eqref{pot} (hence omitting the argument $\chi_{E^\kappa}/|E^\kappa|$), namely 
\begin{equation}\label{potell2}
P^\kappa(z)= \left(\left(-\log|\cdot| +\kappa\right) \star \frac{\chi_{E^\kappa}}{|E^\kappa|} \right) (z)+ \frac{1}{2}|z|^2, \quad z
\in \mathbb{\C}.
\end{equation}

\begin{lemma}\label{lapposk} 
For every $z \in \partial E^\kappa$ the limit 
of $\Delta P^\kappa(w)$ as $w\to z$, for $w\notin E^\kappa$, exists and satisfies the lower bound 
\begin{equation}\label{oldjump}
\lim_{ E^\kappa \not \ni w \to z} \Delta P^\kappa(w) \ge  1, 
\end{equation}
provided the number $\varepsilon_0$ in the statement of Theorem~\ref{te} is small enough.
\end{lemma}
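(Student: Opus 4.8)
The plan is to decompose $\Delta P^\kappa$ into the three contributions coming from the logarithmic term, the perturbation $\kappa$, and the confinement term, and to show that only the $\kappa$-contribution can move away from the value $2$, by an amount controlled by $\varepsilon_0$. For $w\notin E^\kappa$ write
\begin{equation*}
\Delta P^\kappa(w) = \Delta\Big(-\log|\cdot|\star\tfrac{\chi_{E^\kappa}}{|E^\kappa|}\Big)(w) + \Delta\Big(\kappa\star\tfrac{\chi_{E^\kappa}}{|E^\kappa|}\Big)(w) + \Delta\big(\tfrac12|w|^2\big).
\end{equation*}
The last term equals $2$. For the first term, recall that in $\R^2$ one has $\Delta(-\log|\cdot|)=-2\pi\delta_0$ in the sense of distributions (equivalently, summing \eqref{sod1log} and \eqref{sod2log}), so $\Delta\big(-\log|\cdot|\star\chi_{E^\kappa}/|E^\kappa|\big)=-2\pi\,\chi_{E^\kappa}/|E^\kappa|$, which vanishes identically on $\R^2\setminus\overline{E^\kappa}$; hence its limit as $w\to z$ from outside $E^\kappa$ is $0$.

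The heart of the matter is the term coming from $\kappa$. Summing the identities \eqref{sod1} and \eqref{sod2}, the distributional Laplacian of $\kappa$ is
\begin{equation*}
\Delta\kappa = \Big(\int_{|\xi|=1}\xi\cdot\nabla\kappa(\xi)\,d\sigma(\xi)\Big)\delta_0 + \pv\,\Delta\kappa,
\end{equation*}
and the coefficient of $\delta_0$ \emph{vanishes}, because $\xi\cdot\nabla\kappa(\xi)=0$ by Euler's identity for the $0$-homogeneous function $\kappa$ (this is exactly where homogeneity of degree $0$, as opposed to $-\log|\cdot|$, is used). Therefore $\Delta\big(\kappa\star\chi_{E^\kappa}\big)=(\pv\,\Delta\kappa)\star\chi_{E^\kappa}$. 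Off the origin, $\Delta\kappa$ is even (a second derivative of an even function), homogeneous of degree $-2=-d$, of class $C^1$ (since $\kappa\in C^3$), and it has zero mean on the unit circle by \eqref{kij0}; thus $\pv\,\Delta\kappa$ is the kernel of an admissible even homogeneous Calder\'on--Zygmund operator, and Lemma~\ref{intsingn} applies to the domain $E^\kappa$. By \eqref{czholder}, $(\pv\,\Delta\kappa)\star\chi_{E^\kappa}$ is H\"older continuous on $\R^2\setminus\overline{E^\kappa}$, hence extends continuously to $\partial E^\kappa$ from outside; this gives the existence of the limit in \eqref{oldjump}, and by \eqref{czb},
\begin{equation*}
\Big|\lim_{E^\kappa\not\ni w\to z}(\pv\,\Delta\kappa)\star\chi_{E^\kappa}(w)\Big| \le C\,\|\Delta\kappa\|_{\rm CZ}.
\end{equation*}

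Assembling the three pieces gives $\displaystyle\lim_{E^\kappa\not\ni w\to z}\Delta P^\kappa(w) \ge 2 - \frac{C}{|E^\kappa|}\,\|\Delta\kappa\|_{\rm CZ}$, and it remains to make this error term small. By \eqref{CT}, $\|\Delta\kappa\|_{\rm CZ}=\sup_{|\xi|=1}\big(|\Delta\kappa(\xi)|+|\nabla\Delta\kappa(\xi)|\big)\le C'\varepsilon_0$ by the smallness hypothesis \eqref{small}; note that $\nabla\Delta\kappa$ involves third order derivatives of $\kappa$, which is precisely why \eqref{small} is imposed up to $j=3$. Moreover, the constant $C$ in Lemma~\ref{intsingn} depends only on the $C^{1,\gamma}$-character of $E^\kappa$, and this can be taken uniform in $\kappa$: by the continuous dependence in the Implicit Function Theorem of Section~\ref{subsub:IFT}, as $\varepsilon_0\to0$ the parameters of $E^\kappa$ converge (the semi-axes to $1$, while the tilting angle is irrelevant to the $C^{1,\gamma}$-character), so for $\varepsilon_0$ small all ellipses $E^\kappa$ have uniformly bounded $C^{1,\gamma}$-character and $|E^\kappa|=\pi a^\kappa b^\kappa\ge \pi/2$. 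Hence $\lim_{E^\kappa\not\ni w\to z}\Delta P^\kappa(w)\ge 2 - C''\varepsilon_0\ge 1$ once $\varepsilon_0$ is small enough.

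The only genuinely delicate points are the vanishing of the $\delta_0$-coefficient of $\Delta\kappa$ — which is what keeps $\Delta P^\kappa$ \emph{bounded} (rather than singular) near $\partial E^\kappa$, so that the security region does not shrink with $\kappa$ — and the uniformity of the Calder\'on--Zygmund constant $C$ of Lemma~\ref{intsingn} over the whole family $\{E^\kappa\}_\kappa$; the remaining steps are routine.
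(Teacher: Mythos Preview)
Your proof is correct and follows essentially the same route as the paper's: decompose $\Delta P^\kappa$ outside $E^\kappa$ as $2+(\Delta\kappa)\star\chi_{E^\kappa}/|E^\kappa|$, identify $\pv\,\Delta\kappa$ as an admissible even homogeneous Calder\'on--Zygmund kernel, and invoke Lemma~\ref{intsingn} for both the existence of the boundary limit and the uniform bound in terms of $\|\Delta\kappa\|_{\rm CZ}\le C'\varepsilon_0$. Your treatment is in fact slightly more explicit than the paper's in two respects: you spell out the uniformity of the constant $C$ from Lemma~\ref{intsingn} over the family $\{E^\kappa\}$, and you observe via Euler's identity that the $\delta_0$-coefficient $\int_{|\xi|=1}\xi\cdot\nabla\kappa(\xi)\,d\sigma(\xi)$ actually vanishes (the paper records the formula \eqref{lapk} but does not simplify it). One small correction to your closing commentary: the vanishing of that $\delta_0$-coefficient is \emph{not} what keeps $\Delta P^\kappa$ bounded near $\partial E^\kappa$; even if it were nonzero, its contribution after convolution with $\chi_{E^\kappa}/|E^\kappa|$ would be a bounded multiple of $\chi_{E^\kappa}$, which is identically zero from the exterior side anyway. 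So this observation is pleasant but not load-bearing.
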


\begin{proof}
For $w\not\in E^\kappa$ we have that 
\begin{equation}\label{lappka}
\Delta P^\kappa (w) = 2 + \left(\Delta \kappa \star  \frac{\chi_{E^\kappa}}{|E^\kappa|}\right)(w).
\end{equation}
The distribution $\Delta \kappa$, by \eqref{sod1} and \eqref{sod2}, is the sum of two terms 
\begin{equation}\label{lapk}
\Delta \kappa = \left(\int_{|\xi|=1} \langle \nabla \kappa (\xi), \nu(\xi) \rangle \, d\sigma(\xi)\right)\delta_0+ \pv \Delta \kappa,
\end{equation}
where $\nu(\xi)$ is the exterior unitary normal vector to the unit ball at $\xi$. 

Note that, since $\Delta \kappa$ is of class $C^1$ off the origin, by Lemma~\ref{intsingn}  $\Delta \kappa \star  \frac{\chi_{E^\kappa}}{|E^\kappa|}$ is a function of class $C^{0,\gamma}$ on  $\C\setminus E^\kappa$. Therefore, the limit in \eqref{oldjump} exists.

Finally, by \eqref{lapk} and \eqref{czb} we can take the number $\varepsilon_0$ in the statement of Theorem~\ref{te} so small that
$$
\left|	\Delta \kappa \star \frac{\chi_{E^\kappa}}{|E^\kappa|}(z) \right| \leq 1, 
\quad z \in \C \setminus \partial E^\kappa,
$$
which completes the proof of Lemma~\ref{lapposk}.
\end{proof}

\begin{remark}\label{Rem:CZ3}
The hypothesis on the smallness of the third order derivatives of $\kappa$ in \eqref{small} is used precisely in the last step of
 the proof of Lemma~\ref{lapposk}, to ensure that
 the kernel $\Delta \kappa$ provides a smooth Calder\'on-Zygmund operator ($T$ in the notation of Lemma~\ref{lapposk}) to which one can apply Lemma~\ref{intsingn}. Then, by \eqref{czb}, we have that $\Delta \kappa(z)$ is controlled by $\|\Delta \kappa\|_{\rm CZ}$, and by the definition  \eqref{CT}, $\|\Delta \kappa\|_{\rm CZ}$ can be controlled in terms of the second and third derivatives of $\kappa$.
\end{remark}

\subsubsection{The security region} \label{sec:security}
Recall that $P^\kappa$ is constant on $E^\kappa$ and thus the Hessian of $P^\kappa$ vanishes on $\mathring E^\kappa$. The Hessian of 
$P^\kappa$ has a jump at each point of the ellipse $\partial E^\kappa$. We define its value at $z\in \partial E^\kappa$  as
\begin{equation*}
 H(P^\kappa)(z) = \lim_{E^\kappa \not\ni w\rightarrow z} H(P^\kappa)(w).
\end{equation*}
Note that the limit above exists arguing as in the proof of Lemma~\ref{lapposk}.

We would like to find an expression for $H(P^\kappa)(z)$ at points $z \in \partial E^\kappa$ and for that we need first to prove tangential continuity
of the second order derivatives of $P^\kappa$. This follows from applying Lemma~\ref{tangcont} to first order derivatives of the potential \eqref{potell2}. 

More precisely, we have the following.

\begin{lemma}\label{hessian}
Let $E^\kappa$ be the ellipse satisfying the first Euler-Lagrange condition. Then any tangent vector at $z\in \partial E^\kappa$ is in the kernel of the symmetric operator  $H(P^\kappa)(z).$  
Consequently the unitary normal vector $\nu^\kappa(z)$  is an eigenvector of 
 $H(P^\kappa)(z)$ and the matrix of $H(P^\kappa)(z)$  in the basis $\{\nu^\kappa(z), \tau^\kappa(z)\}$ is of the form
  $$
  \left(
  \begin{matrix}
r^\kappa(z) & 0 \\
0& 0
\end{matrix}
\right)
$$
with $r^\kappa(z)\geq 1$. 
 \end{lemma}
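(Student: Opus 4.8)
The plan is to combine the tangential-continuity result Lemma~\ref{tangcont} with the fact, established in Section~\ref{sect:EL11}, that $P^\kappa$ is constant on $\mathring E^\kappa$, and with the Laplacian bound of Lemma~\ref{lapposk}. Write $P^\kappa=u^\kappa+\tfrac12|\cdot|^2$ with $u^\kappa=(-\log|\cdot|+\kappa)\star\tfrac{\chi_{E^\kappa}}{|E^\kappa|}$, and for $i\in\{1,2\}$ set $G_i:=\partial_i(-\log|\cdot|)+\partial_i\kappa$. Each $G_i$ is odd, homogeneous of degree $-1$, and of class $C^2$ off the origin (here is where $\kappa\in C^3$ off the origin enters), and $\partial_i u^\kappa=\tfrac1{|E^\kappa|}\,G_i\star\chi_{E^\kappa}$. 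Away from $\partial E^\kappa$ the function $G_i\star\chi_{E^\kappa}$ is smooth, so for $w\notin\partial E^\kappa$ and $z\in\partial E^\kappa$ one has the identity
\begin{equation*}
\langle\nabla(G_i\star\chi_{E^\kappa})(w),\tau^\kappa(z)\rangle=|E^\kappa|\big(\langle H(P^\kappa)(w)\,\tau^\kappa(z),e_i\rangle-\tau^\kappa_i(z)\big).
\end{equation*}

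First I would apply Lemma~\ref{tangcont} with $G=G_i$ and $D=E^\kappa$ --- the boundary of an ellipse is $C^\infty$, in particular $C^{1,\gamma}$. It gives that $\langle\nabla(G_i\star\chi_{E^\kappa})(w),\tau^\kappa(z)\rangle$ has one-sided limits from $E^\kappa$ and from $\R^2\setminus\overline{E^\kappa}$ at each $z\in\partial E^\kappa$, and that the two limits coincide. Since $\tau^\kappa_i(z)$ does not depend on $w$, the displayed identity then shows that $\langle H(P^\kappa)(w)\,\tau^\kappa(z),e_i\rangle$ has equal one-sided limits at $z$, for $i=1,2$; hence so does the vector $H(P^\kappa)(w)\,\tau^\kappa(z)$. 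By the first Euler--Lagrange condition $P^\kappa$ is constant on $\mathring E^\kappa$, so $H(P^\kappa)\equiv0$ there and the limit from inside is $0$; therefore the limit from outside, which by definition is $H(P^\kappa)(z)\,\tau^\kappa(z)$, is also $0$. This proves that every tangent vector at $z$ belongs to $\ker H(P^\kappa)(z)$. (Incidentally, the same reasoning, via Lemma~\ref{intsingn} as in the proof of Lemma~\ref{tangcont}, shows that $G_i\star\chi_{E^\kappa}$ is of class $C^{1,\gamma}$ in $E^\kappa$ and in $\R^2\setminus\overline{E^\kappa}$, so that all second-order derivatives of $P^\kappa$ have one-sided limits at $\partial E^\kappa$, which justifies the definition of $H(P^\kappa)(z)$.)

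For the remaining assertions: $H(P^\kappa)(z)$ is symmetric, being a limit of the classical Hessians $H(P^\kappa)(w)$, $w\notin\overline{E^\kappa}$, of the function $P^\kappa$, which is $C^2$ off $\partial E^\kappa$. In $\R^2$ the tangent line at $z$ is the orthogonal complement of $\nu^\kappa(z)$, and a symmetric operator maps the orthogonal complement of a null vector into itself; hence $\nu^\kappa(z)$ is an eigenvector of $H(P^\kappa)(z)$, and in the orthonormal basis $\{\nu^\kappa(z),\tau^\kappa(z)\}$ the matrix of $H(P^\kappa)(z)$ is $\begin{pmatrix}r^\kappa(z)&0\\0&0\end{pmatrix}$ with $r^\kappa(z):=\langle H(P^\kappa)(z)\nu^\kappa(z),\nu^\kappa(z)\rangle$. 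Finally, taking the trace, $r^\kappa(z)=\operatorname{tr}H(P^\kappa)(z)=\lim_{E^\kappa\not\ni w\to z}\Delta P^\kappa(w)\ge1$ by Lemma~\ref{lapposk}, which gives $r^\kappa(z)\ge1$ and finishes the proof.

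The only step that needs any care is the reduction just sketched, namely passing from the scalar statement of Lemma~\ref{tangcont} to the tangential continuity of the full second-order derivative array of $P^\kappa$, through the splitting $P^\kappa=u^\kappa+\tfrac12|\cdot|^2$ and the kernels $G_i$; in particular the evenness of $\kappa$ is used to ensure that each $G_i$ is odd (so that $\nabla G_i$ is an even Calder\'on--Zygmund kernel) and the $C^3$-regularity of $\kappa$ off the origin to ensure $G_i\in C^2$ there. No smallness of $\kappa$ is needed in this lemma beyond what is already invoked through Lemma~\ref{lapposk}.
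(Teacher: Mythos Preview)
Your proof is correct and follows essentially the same approach as the paper's: apply Lemma~\ref{tangcont} to the first-order derivatives of the convolution part of $P^\kappa$ to obtain tangential continuity of the Hessian across $\partial E^\kappa$, use that $H(P^\kappa)\equiv0$ in $\mathring E^\kappa$ to conclude $H(P^\kappa)(z)\tau^\kappa(z)=0$, then invoke symmetry and Lemma~\ref{lapposk}. You make explicit the kernels $G_i$ and the identity linking $\nabla(G_i\star\chi_{E^\kappa})$ to $H(P^\kappa)$, which the paper leaves implicit in the sentence preceding the lemma.
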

 \begin{proof}
  By Lemma~\ref{tangcont} the Hessian $H(P^\kappa)$ is continuous at each point of the ellipse in the tangential direction, and being identically $0$ on the interior of the ellipse one concludes that $H(P^\kappa)(z)(\tau^\kappa(z))=0, \; z \in \partial E^\kappa$. 
  
Hence $\tau^\kappa(z)$ is an eigenvector of $H(P^\kappa)$, with eigenvalue zero. Being $H(P^\kappa)$ symmetric, we have that also $\nu^\kappa(z)$ is an eigenvector, and $H(P^\kappa)$ is diagonal in the basis $\{\nu^\kappa(z), \tau^\kappa(z)\}$. Let $r^\kappa(z)$ denote the eigenvalue corresponding to the eigenvector $\nu^\kappa(z)$; then $r^\kappa(z)$ is the limit of the Laplacian from the exterior of $E^\kappa$ at the point $z$, which satisfies the required estimate by Lemma~\ref{lapposk}.
 \end{proof}

We recall that by the first Euler-Lagrange equation we have $P^\kappa(z)=C^\kappa, \; z \in E^\kappa$. We now define the security region, an elliptical annulus of $E^\kappa$ in the exterior domain, where the second Euler-Lagrange condition is satisfied, even strictly. The idea is to prove that $P^\kappa$ is increasing in the direction of the outer normal $\nu^\kappa$, at least close to $\partial E^\kappa$.

Let $z \in \partial E^\kappa$, and define the function 
 $$
g^\kappa: t\in [0,+\infty) \mapsto P^\kappa(z+ t \nu^\kappa(z)). 
 $$
Note that $(g^\kappa)'(t ) = \langle \nabla P^\kappa(z+ t \nu^\kappa(z)),\nu^\kappa(z)\rangle$. Since $\nabla P^\kappa$ is a continuous function on $\C$ vanishing on $E^\kappa$ (again by the first Euler-Lagrange condition for $E^\kappa$), we have that $(g^\kappa)'(0)=0$. Moreover,  
$(g^\kappa)''(t) = \langle H(P^\kappa)(z+ t \nu^\kappa(z))\nu^\kappa(z),\nu^\kappa(z)\rangle$.  By \eqref{czholder} each second order derivative of $P^\kappa$ is continuous (in fact of class $C^{0,\gamma}$) up to the boundary in $\C \setminus E^\kappa$, and so $(g^\kappa)''(0) = \langle H(P^\kappa)(z)\nu^\kappa(z),\nu^\kappa(z)\rangle = \langle r^\kappa(z) \nu^\kappa(z),\nu^\kappa(z)\rangle = r^\kappa(z)$, where we have also used Lemma~\ref{hessian}.

Since $H(P^\kappa)$ is of class $C^{0,\gamma}$, we have that 
$$
|(g^\kappa)''(t) - (g^\kappa)''(0)| \leq c |t|^\gamma,
$$
where the constant $c$ is independent of $\kappa$ by Lemma~\ref{intsingn} and assumption \eqref{small}. In particular, 
$$
(g^\kappa)''(t) \geq (g^\kappa)''(0) - c\,|t|^\gamma = r^\kappa(z) - c\,|t|^\gamma \geq 1 - c\,|t|^\gamma,
$$
and so, there exists $\delta>0$, independent of $\kappa$, such that if $|t|<\delta$, then $(g^\kappa)''(t)>0$. This implies that $(g^\kappa)'$ is increasing in the interval $[0, \delta]$. Since $(g^\kappa)'(0)=0$, then $(g^\kappa)'(t)$ is positive close to zero, and so $g^\kappa$ is increasing close to zero. In other words, $P^\kappa$ is strictly increasing in a $\delta$-strip around $E^\kappa$. Let  
$$
N^\kappa = \{z \in \R^2 : \operatorname{dist}(z, E^\kappa) < \delta \};
$$
we call $N^\kappa\setminus E^\kappa$ the security region.  
We have proved that $P^\kappa$ is increasing in $N^\kappa\setminus E^\kappa$. Since $P^\kappa=C^\kappa$ in  $E^\kappa$, by the first Euler-Lagrange equation, we have that 
$P^\kappa> C^\kappa$ in $N^\kappa\setminus E^\kappa$.

In what follows, for brevity, we write $\kappa \rightarrow 0$ to mean that $\kappa$ tends to $0$ in the space $C^3(\mathbb{T})$, that is, the quantity
$$
\|\kappa\|_{C^3(\mathbb{T})}= \sup_{ j\in\{0,1,2, 3\}} \sup_{|\xi|=1} \left|\nabla^j\kappa (\xi)\right|
$$
becomes as small as we wish. 

\subsection{Approximating ellipses}\label{sect:approx} We now show that $E^\kappa$ and $P^\kappa$ are `close' to  $B(0,1)$ and $P^0$, respectively, for $\kappa$ small.

First of all, considering the system \eqref{systemaaa}, we conclude that 
\begin{equation*}
 a^\kappa \xrightarrow{\kappa \to 0} 1, \quad b^\kappa \xrightarrow{\kappa \to 0} 1.
 \end{equation*}
Moreover, the set $E^\kappa$ is close  to $B(0,1)$ in the Hausdorff distance for every $\kappa$ with $\|\kappa\|_{C^3(\mathbb{T})}$ small enough. In particular, this ensures that if $\|\kappa\|_{C^3(\mathbb{T})}$ is sufficiently small, then there exists a positive number $\gamma$ such that $B(0,1+\gamma) \subset N^\kappa$.

For the potentials, we have the following result.

\begin{lemma}\label{wconv}
$P^\kappa$ converges to $P^0$ uniformly on $\C \setminus B(0,1+\gamma)$, as $\kappa$ tends to $0$.
\end{lemma}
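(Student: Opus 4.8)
The plan is to decompose the difference $P^\kappa - P^0$ into a part coming from the change of the interaction potential (the kernel $\kappa$ itself) and a part coming from the change of the domain (from $E^\kappa$ to $B(0,1)$), and to show that both parts are uniformly small on $\C\setminus B(0,1+\gamma)$ when $\|\kappa\|_{C^3(\mathbb T)}$ is small. Concretely, write
\begin{equation*}
P^\kappa(z)-P^0(z) = \Big(-\log|\cdot|\star\Big(\tfrac{\chi_{E^\kappa}}{|E^\kappa|}-\chi_{B(0,1)}\Big)\Big)(z) + \Big(\kappa\star\tfrac{\chi_{E^\kappa}}{|E^\kappa|}\Big)(z),
\end{equation*}
since the confinement term $\tfrac12|z|^2$ cancels. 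The second summand is immediately controlled: $\kappa$ is bounded by $\ep_0$ on the unit circle and zero-homogeneous, hence $\|\kappa\|_{L^\infty(\C)}\le\ep_0$, and $|E^\kappa|$ is bounded below (it tends to $\pi$), so $|\kappa\star\chi_{E^\kappa}/|E^\kappa||\le \ep_0|E^\kappa|/|E^\kappa| = \ep_0$ everywhere, with no need to stay away from the boundary. This term therefore tends to $0$ uniformly on all of $\C$.

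The first summand is where the domain perturbation enters. Here I would use that $E^\kappa\to B(0,1)$ in Hausdorff distance (established in Section~\ref{sect:approx}), so that $\tfrac{\chi_{E^\kappa}}{|E^\kappa|}-\chi_{B(0,1)}$ is supported in a thin symmetric-difference region $E^\kappa\triangle B(0,1)$ contained in an annulus $\{1-\eta\le|w|\le 1+\eta\}$ with $\eta\to 0$, plus a small multiplicative correction $\big(\tfrac{1}{|E^\kappa|}-\tfrac1\pi\big)\chi_{E^\kappa}$ of total mass tending to $0$. For $z$ with $|z|\ge 1+\gamma$ and $w$ in that annulus (with $\eta<\gamma/2$, say), the kernel $-\log|z-w|$ is bounded uniformly: $|z-w|$ is bounded below by $\gamma/2$ and above by $|z|+2$, so $|\log|z-w||\le C(1+\log(2+|z|))$. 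A direct estimate then gives, for such $z$,
\begin{equation*}
\Big|\Big(-\log|\cdot|\star\Big(\tfrac{\chi_{E^\kappa}}{|E^\kappa|}-\chi_{B(0,1)}\Big)\Big)(z)\Big| \le C\big(1+\log(2+|z|)\big)\,\Big(|E^\kappa\triangle B(0,1)| + \big|\tfrac{|E^\kappa|}{\pi}-1\big|\Big),
\end{equation*}
which is not yet uniform in $z$ because of the $\log(2+|z|)$ growth. To remove it, I would exploit cancellation: both measures are probability measures, so subtract a fixed reference point — write $-\log|z-w| = -\log|z| - \log|1-w/z|$ and note the $-\log|z|$ term integrates to $0$ against the difference of two probability measures, leaving $\int -\log|1-w/z|\,d(\cdots)(w)$, where $|\log|1-w/z||\le C$ for $|z|\ge 1+\gamma$ and $|w|\le 1+\gamma/2$ (since then $|w/z|\le(1+\gamma/2)/(1+\gamma)<1$ is bounded away from $1$). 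This yields a genuinely uniform bound $C\,(|E^\kappa\triangle B(0,1)| + ||E^\kappa|/\pi-1|)$ on $\C\setminus B(0,1+\gamma)$, and both quantities tend to $0$ as $\kappa\to 0$ by the Hausdorff convergence and $a^\kappa,b^\kappa\to 1$.

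Combining the two parts gives $\|P^\kappa-P^0\|_{L^\infty(\C\setminus B(0,1+\gamma))}\to 0$ as $\kappa\to 0$, which is the claim. The main obstacle is the logarithmic growth of the kernel at infinity, handled as above by using that we are integrating against a \emph{difference of probability measures} so the leading $-\log|z|$ term drops out; once that observation is in place the rest is a routine estimate on the thin annular symmetric-difference region, using only that $E^\kappa\to B(0,1)$ in Hausdorff distance and $|E^\kappa|\to\pi$.
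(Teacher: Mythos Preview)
Your proof is correct. The only blemish is a typo in the displayed decomposition: you write $\chi_{B(0,1)}$ where you mean $\chi_{B(0,1)}/|B(0,1)|=\chi_{B(0,1)}/\pi$; the rest of your argument (the splitting into the symmetric-difference piece and the multiplicative correction, the appeal to ``both measures are probability measures'') makes clear you intend the normalised version, so this does not affect the substance.

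Your route differs from the paper's in how the logarithmic growth at infinity is tamed. The paper observes that the difference of the two log-potentials is harmonic outside $B(0,1)\cup E^\kappa$ and vanishes at infinity (precisely because both are probability measures), hence it suffices to estimate on a compact annulus $B(0,R)\setminus B(0,1+\gamma)$, where $|\log|z-w||$ is bounded and the same ``bounded-kernel'' argument used for $\kappa$ applies. You instead make the cancellation explicit via $-\log|z-w|=-\log|z|-\log|1-w/z|$: the constant-in-$w$ term integrates to zero against a difference of probability measures, and $|\log|1-w/z||$ is uniformly bounded for $|z|\ge 1+\gamma$, $|w|\le 1+\gamma/2$. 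Your argument is more elementary and entirely self-contained (no harmonicity or maximum-principle reasoning), and it gives the uniform bound in one step; the paper's approach is more in the spirit of classical potential theory and perhaps generalises more readily to other harmonic kernels. Both exploit the same underlying fact --- that a difference of probability measures kills the leading $-\log|z|$ behaviour --- but package it differently.
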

\begin{proof}
 We first estimate the terms involving $\kappa$.  We have
\begin{equation*}\label{kb}
\begin{split}
 &\left|\left(\kappa\star \frac{\chi_{E^\kappa}}{|E^\kappa|}\right)(z)-\left(\kappa\star \frac{\chi_{B(0,1)}}{|B(0,1)|}\right)(z)\right| \\*[7pt] &\le
\left|\frac{1}{|E^\kappa|}-\frac{1}{|B(0,1)|}\right| |(\kappa\star \chi_{E^\kappa})(z)| + \frac{1}{|B(0,1)|} 
\left|\left(\kappa\star \chi_{E^\kappa}-\kappa\star \chi_{B(0,1)}\right)(z)\right|  \\*[7pt]& \le
\left|\frac{1}{|E^\kappa|}-\frac{1}{|B(0,1)|}\right| \|\kappa\|_{\infty} |E^\kappa| +
\frac{\|\kappa\|_{\infty}}{|B(0,1)|} \left(|E^\kappa\setminus B(0,1)|+|B(0,1)\setminus E^\kappa|\right),
\end{split}
\end{equation*}
which tends to $0$ with $\kappa$. Let us deal now with the terms involving the logarithm. Let us remark that we can arrange things so that 
$B(0,1)\cup E^\kappa \subset B(0,1+\gamma) \subset N^\kappa$ ($\kappa$ close enough to $0$ in the $C^3$-norm). To estimate
\begin{equation*}
\left(\log|z|\star \frac{\chi_{E^\kappa}}{|E^\kappa|}\right)(z)-\left(\log|z|\star \frac{\chi_{B(0,1)}}{|B(0,1)|}\right)(z)
\end{equation*} 
we first note that the function above is harmonic in $\C \setminus (B(0,1)\cup E^\kappa)$ and vanishes at $\infty.$ Thus we only need
to estimate that difference for $z \in B(0,R)\setminus B(0,1+\gamma)$ for $R$ large. Now when $w \in B(0,1)\cup E^\kappa$ and 
$z \in  B(0,R)\setminus B(0,1+\gamma)$ the quantity $|\log|z-w||$ is bounded by a constant depending only on $R$ and the distance
between $B(0,1)\cup E^\kappa$ and $\R^2\setminus B(0,1+\gamma)$, which is positive. Hence we can argue as we did above in dealing with bounded kernels.
\end{proof}

Finally, we show that, for $\kappa$ small, the constants in the right-hand side of the Euler-Lagrange conditions are close to the constants for $\kappa=0$.

\begin{lemma}\label{convcons}
The constants $C^\kappa$ converge to $C^0$, as $\kappa$ tends to $0$.
\end{lemma}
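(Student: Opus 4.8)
The plan is to exploit the first Euler-Lagrange condition from Section~\ref{sect:EL11}: the potential $P^\kappa$ is continuous (indeed of class $C^1$) and equals $C^\kappa$ throughout $E^\kappa$, and the ellipse $E^\kappa=E(a^\kappa,b^\kappa,\varphi^\kappa)$ is centred at the origin, so that
\[
C^\kappa=P^\kappa(0)=-\frac{1}{|E^\kappa|}\int_{E^\kappa}\log|w|\,dw+\frac{1}{|E^\kappa|}\int_{E^\kappa}\kappa(w)\,dw ,
\]
and likewise $C^0=P^0(0)=-\frac{1}{\pi}\int_{B(0,1)}\log|w|\,dw$. It then suffices to pass to the limit $\kappa\to0$ in each of the two terms on the right-hand side.

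The term involving $\kappa$ is harmless: $\bigl|\frac{1}{|E^\kappa|}\int_{E^\kappa}\kappa(w)\,dw\bigr|\le\|\kappa\|_{\infty}\le\|\kappa\|_{C^3(\mathbb T)}\to0$. For the logarithmic term I would use that $a^\kappa,b^\kappa\to1$ (Section~\ref{sect:approx}): since $E^\kappa$ is a rotation about the origin of $E_0(a^\kappa,b^\kappa)$, one has $B(0,\min\{a^\kappa,b^\kappa\})\subset E^\kappa\subset B(0,\max\{a^\kappa,b^\kappa\})$, so that for any fixed $\eta\in(0,\tfrac12)$ and $\kappa$ small enough $B(0,1-\eta)\subset E^\kappa\cap B(0,1)$ and $E^\kappa\triangle B(0,1)\subset B(0,1+\eta)\setminus B(0,1-\eta)$; in particular $|E^\kappa|\to\pi$ and $|E^\kappa\triangle B(0,1)|\le4\pi\eta$. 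On that annulus $|\log|w||\le\log 2$, hence
\[
\Bigl|\int_{E^\kappa}\log|w|\,dw-\int_{B(0,1)}\log|w|\,dw\Bigr|\le(\log 2)\,|E^\kappa\triangle B(0,1)|\le4\pi\eta\log2 ,
\]
and letting $\kappa\to0$ and then $\eta\to0$ gives $\int_{E^\kappa}\log|w|\,dw\to\int_{B(0,1)}\log|w|\,dw$. Dividing by $|E^\kappa|\to\pi$ yields the convergence of this term as well, and therefore $C^\kappa=P^\kappa(0)\to P^0(0)=C^0$.

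The only mildly delicate point is the logarithmic singularity at the origin, but it causes no trouble, since the symmetric difference $E^\kappa\triangle B(0,1)$ stays at a fixed positive distance from $0$ once $\kappa$ is small; the rest of the argument is elementary. I do not expect to need Lemma~\ref{wconv} here, because that lemma controls $P^\kappa$ only in the exterior region $\C\setminus B(0,1+\gamma)$, whereas the constant $C^\kappa$ is most conveniently read off at the centre of $E^\kappa$.
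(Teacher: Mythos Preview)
Your proof is correct and follows essentially the same approach as the paper: both evaluate $C^\kappa=P^\kappa(0)$ and pass to the limit in the resulting integral over $E^\kappa$. The only cosmetic difference is that the paper invokes the Dominated Convergence Theorem (using $\chi_{E^\kappa}\to\chi_{B(0,1)}$ pointwise off $\partial B(0,1)$), whereas you estimate the symmetric difference $E^\kappa\triangle B(0,1)$ directly; both justifications are routine once the key identity $C^\kappa=P^\kappa(0)$ is in hand.
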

\begin{proof}
Since $P^\kappa$ is constant on $E^\kappa$, we have in particular that 
\begin{equation*}
C^\kappa=P^\kappa(0) = \frac{1}{|E^\kappa|}\int_{E^\kappa} \left(-\log|z|+\kappa(z)\right)\,dz.
\end{equation*}
Remark that $\chi_{E^\kappa}(z)$ tends to  $\chi_{B(0,1)}(z)$ as $\kappa$ tends to $0$,
for each $z \notin \partial B(0,1)$, and apply the Dominated Convergence Theorem to conclude the proof.
\end{proof}

\subsubsection{Proof of the second Euler-Lagrange condition~\eqref{E2}} So far we have shown in Section~\ref{sec:security} that $P^\kappa(z) > C^\kappa$ for $z$ in the security region
$N^\kappa \setminus E^\kappa$. It remains to show that $P^\kappa(z) \geq C^\kappa$ outside $N^\kappa$. To this aim we use the approximation arguments in Section~\ref{sect:approx}, together with the fact that $P^0$ satisfies the first Euler-Lagrange condition 
$$
P^0(z)= C^0, \quad z \in B(0,1),
$$
and the second Euler-Lagrange condition in the strengthened form
\begin{equation}\label{seeucon0}
P^0(z)>C^0, \quad z \notin B(0,1),
\end{equation}
(see, e.g., the proof in \cite{CMMRSV} for $\alpha=0$).

Let $z \notin N^\kappa$; then since $B(0,1+\gamma) \subset N^\kappa$, we have that $z\notin B(0,1+\gamma)$,  and thus $P^0(z) \ge C^0+\eta,$ for some
positive $\eta,$ by \eqref{seeucon0}. If $\|\kappa\|_{C^3(\mathbb{T})}$ is sufficiently small, by applying  Lemmas~\ref{wconv}
and~\ref{convcons}, 
then for $\varepsilon = \eta/3$ we have 
\begin{equation*}
\begin{split}
P^\kappa(z)\ge C^0 + \eta -\varepsilon \geq C^\kappa+\eta - 2 \varepsilon = C^\kappa + \frac{\eta}{3} > C^\kappa, \quad z \notin N^\kappa,
\end{split}
\end{equation*}
which completes the proof of the second Euler-Lagrange condition.

\section{The higher-dimensional case}
In this section we briefly illustrate the higher-dimensional version of the perturbation result. Let $d\geq 3$, and let $I^\kappa$ denote the functional defined on probability measures $\mu \in \mathcal{P}(\R^d)$ as
\begin{equation}\label{end}
I^\kappa(\mu)=\int_{\R^d}\int_{\R^d} W^\kappa(x-y)\, d\mu(y) d\mu(x) + \int_{\R^d} |x|^2 \,d\mu(x),
\end{equation}
where the interaction potential $W^\kappa$ is given by
$$
W^\kappa(x) = \frac{1}{|x|^{d-2}}+\kappa(x), \quad x\in \R^d, \quad x\neq0,
$$
with $W^{\kappa}(0)=+\infty$, and $\kappa$ is an even real-valued function, homogeneous of degree $2-d$ and of class $C^3(\R^d\setminus\{0\}).$ For simplicity, in this section we assume that $\widehat{W^\kappa}>0$ outside the origin, and that $\kappa$ is even in each variable separately. 

The higher-dimensional version of Theorem~\ref{te}, under these slightly simplified assumptions, is the following.

\begin{theorem}\label{ted}Let $d\geq 3$. 
There exists $\ep_0>0$ such that if $\kappa$ is a real-valued function, homogeneous of degree $2-d$, even in each variable, of class $C^3$ off the origin, 
satisfies the smallness condition
\begin{equation}\label{smalld}
 |\nabla^j \kappa(x)| \le \ep_0 \quad \textrm{for } \, |x|=1 \quad \textrm{and }\, j \in \{0,1,2,3\},
\end{equation}
and $\widehat{W^\kappa}>0$ outside the origin, 
then there exists an ellipsoid with interior $E$, defined as in \eqref{ell}, such that the probability measure $\chi_E /|E|$ is the unique minimiser of the energy~\eqref{end}.
\end{theorem}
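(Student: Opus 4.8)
The plan is to mimic the three steps carried out for $d=2$: existence by the direct method, uniqueness from the positivity of $\widehat{W^\kappa}$ (now part of the hypotheses), and characterisation via the two Euler--Lagrange conditions \eqref{E1}--\eqref{E2}, read in $\R^d$. Existence is literally the argument of Section~3: the kernel $W^\kappa(x-y)+\frac12(|x|^2+|y|^2)$ is lower semicontinuous and bounded below, $I^\kappa(\mu)\ge c\int|x|^2\,d\mu-c'$, so minimising sequences are tight and minimisers compactly supported; strict convexity, hence uniqueness, follows from $\widehat{W^\kappa}>0$ off the origin exactly as in Section~\ref{FT}. (If one wants to check this positivity for concrete kernels, one expands $\kappa$ on $S^{d-1}$ into spherical harmonics of even degree, writes $\kappa(x)=\sum_k P_k(x)/|x|^{d-2+k}$ with $P_k$ harmonic of degree $k$, and applies \eqref{foupolk} with $s=2$; this is the exact analogue of the Fourier-series computation of Section~\ref{FT}.)

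For the first Euler--Lagrange condition I would argue as in Section~\ref{sect:EL11}, restricting attention to axis-aligned ellipsoids $E=E_0(a_1,\dots,a_d)$ as in \eqref{ell}, which is consistent because $\kappa$ is even in each variable. For such $\kappa$, and also for the Newtonian kernel $|x|^{2-d}$ (which depends only on $|x|$), each mixed second derivative $\partial_{ij}(\,\cdot\,)$ with $i\ne j$ is a kernel odd in $x_i$; hence the $\delta_0$-coefficient in the analogue of \eqref{sod3} vanishes and, $\partial_{ij}\kappa$ being the kernel of an even homogeneous Calder\'on--Zygmund operator (by the argument giving \eqref{kij0}), Lemma~\ref{constel} applies and the constant value \eqref{vconst} of $\partial_{ij}\kappa\star\chi_E$ on $E$ is $0$, since $\log\big(\sum_k \xi_k^2/a_k^2\big)$ is even in each $\xi_i$. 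Thus $H\big(P^\kappa(\chi_E/|E|)\big)$ is automatically diagonal on $E$, and it remains to impose the $d$ equations $\partial_{ii}P^\kappa(\chi_E/|E|)=0$ on $E$ in the $d$ unknowns $a=(a_1,\dots,a_d)$. Here $\partial_{ii}\kappa\star\chi_E$ is again constant on $E$ with explicit value \eqref{vconst}; the principal-value part of $\partial_{ii}(|x|^{2-d})$ reproduces the Hessian of the classical Newtonian potential of a uniform ellipsoid, which is quadratic inside $E$ with coefficients the standard elliptic integrals $A_i(a)=c_d\int_0^\infty\!\frac{ds}{(a_i^2+s)\prod_j\sqrt{a_j^2+s}}$, while its $\delta_0$-coefficient is a dimensional constant fixed by $\Delta(|x|^{2-d})$ being a multiple of $\delta_0$ together with symmetry. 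Adding the contribution of $\frac12|x|^2$ (whose Hessian is the identity), the system becomes $\mathcal G(a,\kappa)=0$ with $\mathcal G(a,0)=0$ at $a=(R,\dots,R)$, where $R=R_d$ is the radius of the Coulomb minimiser, and with $\mathcal G$ continuous in $(a,\kappa)$ and of class $C^1$ in $a$. I would then apply the Implicit Function Theorem to obtain, for $\|\kappa\|_{C^3(S^{d-1})}$ small, a unique $a^\kappa$ near $(R,\dots,R)$, hence an ellipsoid $E^\kappa$ with $P^\kappa(\chi_{E^\kappa}/|E^\kappa|)$ constant on $E^\kappa$.

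The second Euler--Lagrange condition would then be obtained by the perturbative scheme of Section~\ref{sect:EL22}, essentially unchanged. For $w\notin E^\kappa$ one has $\Delta P^\kappa(w)=2+\Delta\kappa\star(\chi_{E^\kappa}/|E^\kappa|)(w)$; since $\Delta\kappa$ is a multiple of $\delta_0$ plus the kernel of a \emph{smooth} Calder\'on--Zygmund operator (here the $C^3$-smallness of $\kappa$ is used, as in Remark~\ref{Rem:CZ3}), Lemma~\ref{intsingn} gives $\lim_{E^\kappa\not\ni w\to z}\Delta P^\kappa(w)\ge1$ on $\partial E^\kappa$ once $\ep_0$ is small. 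Lemma~\ref{tangcont}, applied to the first derivatives of the potential \eqref{pot}, yields tangential continuity of the second derivatives of $P^\kappa$ up to $\partial E^\kappa$, so the exterior Hessian of $P^\kappa$ there is, in the normal/tangent frame, the rank-one matrix with unique nonzero entry $r^\kappa(z)\ge1$; integrating twice along outward normal rays produces a security annulus $N^\kappa\setminus E^\kappa$ of width $\delta>0$ uniform in $\kappa$ on which $P^\kappa>C^\kappa$ strictly. Finally, using $a^\kappa\to(R,\dots,R)$, the convergence of $E^\kappa$ to $B(0,R)$ in Hausdorff distance, the uniform convergence $P^\kappa\to P^0$ on $\R^d\setminus B(0,R+\gamma)$ and $C^\kappa\to C^0$ (proved exactly as in Lemmas~\ref{wconv} and~\ref{convcons}), together with the classical strict inequality $P^0>C^0$ outside $B(0,R)$, one concludes $P^\kappa\ge C^\kappa$ off $N^\kappa$ as well, which finishes the characterisation.

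The step I expect to be the real obstacle is the invertibility of the Jacobian $\partial\mathcal G/\partial a$ at $(a,\kappa)=((R,\dots,R),0)$ required by the Implicit Function Theorem. By the symmetry of the ball this Jacobian is of the form $\alpha I+\beta B$, with $B$ the matrix all of whose entries equal $1$, so invertibility amounts to $\alpha\ne0$ and $\alpha+d\beta\ne0$; establishing these two inequalities requires differentiating the elliptic integrals $A_i(a)$ at the symmetric point $a=(R,\dots,R)$ and carefully keeping track of the dimensional constants. In contrast to the planar case there is no residual rotational degeneracy to dispose of --- precisely because $\kappa$ is assumed even in each variable --- so no device analogous to dividing the last equation by $\lambda$ is needed; a purely bookkeeping nuisance is transcribing the Fourier identities of Section~\ref{FT} from Fourier series on $\mathbb T$ to spherical harmonics on $S^{d-1}$, which only matters if one wants to verify the standing assumption $\widehat{W^\kappa}>0$ in examples.
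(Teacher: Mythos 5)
Your plan follows the paper's proof step for step: existence by the direct method, uniqueness from the assumed positivity of $\widehat{W^\kappa}$, the reduction via ``even in each variable'' to the diagonal system $\partial_{ii}P^\kappa(\chi_E/|E|)=0$ on $E$, then the Implicit Function Theorem, and finally the security-annulus argument for the second Euler--Lagrange condition. That is exactly the route the paper takes. Two points, however, deserve attention.

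The main issue is that you correctly identify the invertibility of $\partial\mathcal G/\partial a$ at the Coulomb point as \emph{the} crux, but you leave it as an unverified ``bookkeeping'' task, routed through the classical elliptic integrals $A_i(a)$ for the Newtonian potential of an ellipsoid. That detour is both unnecessary and harder to control than what the paper does: applying Lemma~\ref{constel} directly to the kernel $\partial_{ii}W^0$ gives the constant value on $E$ as $-\int_{|\xi|=1}\log|\xi/a|\,\partial_{ii}W^0(\xi)\,d\sigma(\xi)$, so differentiation in $a_j$ is elementary and yields, at $a=((d-2)^{1/d},\dots,(d-2)^{1/d})$ and $\kappa=0$,
\[
\frac{\partial G_i}{\partial a_j}=d(d-2)^{\frac{d-1}{d}}\int_{|\xi|=1}\xi_i^2\xi_j^2\,d\sigma(\xi).
\]
Positive-definiteness of this matrix is then immediate: for $z\in\R^d$,
\[
\sum_{i,j}\frac{\partial G_i}{\partial a_j}\,z_iz_j
= d(d-2)^{\frac{d-1}{d}}\int_{|\xi|=1}\Big(\sum_j z_j\xi_j^2\Big)^2 d\sigma(\xi)\ge 0,
\]
with equality forcing $z=0$ by taking $\xi=e_i$. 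So the ``two inequalities $\alpha\neq0$, $\alpha+d\beta\neq0$'' that you flag as requiring careful tracking of dimensional constants are in fact subsumed by this one-line quadratic-form argument, without ever invoking $A_i(a)$. As written, your proposal leaves the decisive step unproved.

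A secondary slip: in $\R^d$ the confinement term contributes $\Delta(\tfrac12|x|^2)=d$, not $2$, so for $w\notin E^\kappa$ one has $\Delta P^\kappa(w)=d+\Delta\kappa\star(\chi_{E^\kappa}/|E^\kappa|)(w)$, and the lower bound at the boundary should be $\ge d/2$ (or any fixed positive threshold), not $\ge1$ as you transcribed from the planar case. This does not affect the logic of the security-annulus argument, but the constants must be updated consistently.
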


\begin{remark}
The assumption on the positivity of $\widehat{W^\kappa}$ outside the origin is not too restrictive and is considered only for the sake of simplicity.
Indeed, using standard properties of spherical harmonics one can prove that this condition is satisfied if $\kappa$ is assumed to be small enough in the $C^6$-norm on the sphere (see also \cite[p.~70]{S} and \cite[Lemma~6]{MOV2}).
\end{remark}

Existence of a compactly supported minimiser of  \eqref{end}, is straightforward under the assumptions of Theorem~\ref{ted}. Indeed, the overall potential 
$$
f(x,y)=W^\kappa(x-y)+\frac12(|x|^2+|y|^2)
$$
is lower semicontinuous, and is bounded from below due to homogeneity of $\kappa$ and \eqref{smalld}. Indeed, by homogeneity we have that $|\kappa(x)| \leq |x|^{2-d}  \sup_{|\xi|=1} |\kappa(\xi)|\leq \varepsilon_0 |x|^{2-d}$, and so $W^\kappa(x)\geq (1-\varepsilon_0)|x|^{2-d}>0$ if $\varepsilon_0<1$. In conclusion the energy $I^\kappa$ is lower semicontinuous and bounded from below by the confinement. This guarantees the existence of a compactly supported minimiser.

As for uniqueness, the assumption $\widehat{W^\kappa}(\xi)>0$ for $\xi\neq 0$ guarantees strict convexity of the energy. For strictly convex energies the unique minimiser is characterised by the Euler-Lagrange conditions, and in the following sections we show that they admit a unique ellipsoid as a solution.

We follow the strategy of Section~\ref{sect:EL}, and we only highlight the changes due to the higher-dimensional setting.

\subsection{The first Euler-Lagrange condition} The first Euler-Lagrange condition for $I^\kappa$ in \eqref{end} is 
\begin{equation}\label{E1d}
P^\kappa(\mu)(x) = C \quad \text{for }\mu\text{-a.e. } x \in \supp\mu,
\end{equation}
where the potential $P^\kappa$ of $\mu \in \mathcal{P}(\R^d)$ is defined as 
$$
P^\kappa(\mu)(x) = \left(\frac{1}{|\cdot|^{d-2}}\star \mu  \right) (x)+ (\kappa \star \mu)(x) + \frac{1}{2}|x|^2, \quad x\in \R^d.
$$
Note that since $\kappa$ is even in each variable, by uniqueness, the minimiser is symmetric with respect to all coordinate axes; in particular, if the minimiser is an ellipsoid, then it will be as in \eqref{ell}. As in the two-dimensional case, \eqref{E1d} is in fact equivalent to the vanishing of the Hessian of the potential in $E$, since the potential is even by assumption. We then focus on the system 
\begin{equation}\label{Hessd}
\partial_{ij}P^\kappa\left(\frac{\chi_E}{|E|}\right) = 0 \quad \text{in } E, \quad i,j=1,\dots,d,
\end{equation}
for which we want to exhibit a solution $E$, for $\kappa$ small. 

We start by evaluating the left-hand side of \eqref{Hessd} on a generic ellipsoid as in \eqref{ell}, by applying Lemma~\ref{constel} with kernels $H=\partial_{ij} W^0$, and $H=\partial_{ij} \kappa$, where we denoted $W^0=|\cdot|^{2-d}$. Note that \eqref{canc} is satisfied for both kernels. Lemma~\ref{constel} guarantees that $\partial_{ij}P^\kappa\left(\frac{\chi_E}{|E|}\right)$ is constant on $E$, and as in the two-dimensional case, we need to find an ellipsoid $E$ for which this constant is zero. 

By the assumption that $\kappa$ is even in each variable, the system \eqref{Hessd} simplifies greatly and reduces to the `diagonal' system 
\begin{equation}\label{Hessdiag}
\partial_{ii}P^\kappa\left(\frac{\chi_E}{|E|}\right) = 0, \quad \text{in}\quad E, \quad i=1,\dots,d,
\end{equation}
namely to a system of $d$ equations in $d$ unknowns, namely the semi-axes $a_1,\dots, a_d$ of the ellipsoid (see Remark~\ref{rem:sep-even}  for the case $d=2$). Indeed, for $i\neq j$ we have that the distributional derivatives of $W^0$ and $\kappa$ satisfy
$$
\partial_{ij} W^0 = \pv \partial_{ij} W^0,\quad \partial_{ij} \kappa = \pv \partial_{ij} \kappa,
$$
hence condition \eqref{Hessd}, for $i\neq j$, reduces to
\begin{equation}\label{space-oddity}
\pv \int_E \partial_{ij} \left(\frac{1}{|x|^{d-2}}\right) \,d x+ \pv \int_E \partial_{ij}\kappa (x) \,d x  =0,
\end{equation}
where we have used Lemma~\ref{constel} to replace convolution integrals with their evaluation at the origin, which belongs to $E$. 
Condition \eqref{space-oddity} is clearly satisfied since each term in the equation is the integral of an odd function in the variable $x_j$ on a symmetric domain, and hence is zero by Fubini's Theorem (see Remark \ref{rem:sep-even}).

The system \eqref{Hessdiag} can be made more explicit. First of all, for every $i$ we have that 
\begin{align*}
\partial_{ii} W^0 &= \left( \int_{|\xi|=1} \xi_i \,\partial_i  W^0(\xi) \, d\sigma(\xi)\right) \delta_0 + \pv \partial_{ii} W^0,\\
\partial_{ii} \kappa &= \left( \int_{|\xi|=1} \xi_i \,\partial_i  \kappa (\xi) \, d\sigma(\xi)\right) \delta_0 + \pv \partial_{ii} \kappa.
\end{align*}
Hence, for $x\in E$,
\begin{align*}
(\partial_{ii} W^0 \star \chi_E)(x) &=  \int_{|\xi|=1} \xi_i \,\partial_i  W^0(\xi) \, d\sigma(\xi)  -\int_{|\xi|=1} \log \big|\tfrac{\xi}{a}\big| \partial_{ii} W^0(\xi) \, d\sigma(\xi),\\
(\partial_{ii} \kappa\star \chi_E)(x) &=  \int_{|\xi|=1} \xi_i \,\partial_i  \kappa(\xi) \, d\sigma(\xi)  -\int_{|\xi|=1} \log  \big|\tfrac{\xi}{a}\big| \partial_{ii} \kappa(\xi) \, d\sigma(\xi),
\end{align*}
where we have used Lemma~\ref{constel} and the shorthand $\big|\tfrac{\xi}{a}\big|^2 = \frac{\xi_1^2}{a_1^2}+ \dots+\frac{\xi_d^2}{a_d^2}$. We now define 
$$
I_i(\kappa):= \int_{|\xi|=1} \xi_i \,\partial_i  \kappa(\xi) \, d\sigma(\xi), \quad \text{and } 
\quad F_i(a,\kappa):=  -\int_{|\xi|=1} \log  \big|\tfrac{\xi}{a}\big| \partial_{ii} \kappa(\xi) \, d\sigma(\xi),
$$
where $a=(a_1,\dots,a_d)$, and set 
\begin{align*}
G_i(a,\kappa):= &  \int_{|\xi|=1} \xi_i \,\partial_i  W^0 (\xi) \, d\sigma(\xi)  -\int_{|\xi|=1} \log \big|\tfrac{\xi}{a}\big| \partial_{ii} W^0(\xi) \, d\sigma(\xi) \\
& + I_i(\kappa) + F_i(a,\kappa) + \frac{\omega_d}{d} \prod_{j=1}^da_j,
\end{align*}
where $\omega_d$ is the surface measure of the unit sphere in $\R^d$. Then the system \eqref{Hessdiag} is equivalent to $G_i(a,\kappa)=0$ for $i=1,\dots,d$.

We want to show that, for $\kappa$ small as in Theorem~\ref{ted}, the system $G_i(a,\kappa)=0$ admits a solution `close' to the solution for $\kappa=0$, by using the Implicit Function Theorem. Note that in the expression of $G_i$ the kernel $\kappa$ and its derivatives only appear on the unit sphere, so the smallness assumption \eqref{smalld} on $\kappa$ is exactly what is needed there.
 Since the minimiser of $I^0$ (corresponding to $\kappa=0$) is the normalised characteristic function of the ball centred at zero with radius $(d-2)^{\frac1d}$, we have that ${G}_i((d-2)^{\frac1d},0)=0$ (where with an abuse of notation we used the shorthand $(d-2)^{\frac1d}$ to denote the vector in $\R^d$ with entries all given by $(d-2)^{\frac1d})$. 
 
 We now examine $G_i(a,\kappa)=0$ for $a$ close to $(d-2)^{\frac1d}$ and $\kappa$ close to zero. 
To apply the Implicit Function Theorem we need to show that the $(d\times d)$-matrix with $ij$-entry $\frac{\partial G_i}{\partial a_j}((d-2)^{\frac1d},0)$ is invertible. We have that 
$$
\frac{\partial{G}_i}{\partial a_j}((d-2)^{\frac1d},0)= (d-2)^{1-\frac1d}
\left(
d \int_{|\xi|=1} \xi_j^2 \xi_i^2\, d\sigma(\xi) - \int_{|\xi|=1} \xi_j^2\, d\sigma(\xi) + \frac{\omega_d}{d}
\right).
$$
Since $\int_{|\xi|=1} \xi_j^2 \,d\sigma(\xi)$ is independent of $j$ and 
$$
\sum_{j=1}^{d}\int_{|\xi|=1} \xi_j^2 \,d\sigma(\xi) = \omega_d, 
$$
we have that  $\int_{|\xi|=1} \xi_j^2 \,d\sigma(\xi) = \frac{\omega_d}{d}$, and so 
\begin{equation}\label{Gij}
\frac{\partial{G}_i}{\partial a_j}((d-2)^{\frac1d},0)= d(d-2)^{\frac{d-1}d}
\left(
\int_{|\xi|=1} \xi_j^2 \xi_i^2 \,d\sigma(\xi)
\right).
\end{equation}
We can easily see that the matrix with $ij$-entries as in \eqref{Gij} is positive semi-definite. Indeed, for $z\in \R^d$ we have 
$$
\sum_{i,j=1}^d \left(\frac{\partial{G}_i}{\partial a_j}((d-2)^{\frac1d},0)\right) z_iz_j = 
d(d-2)^{\frac{d-1}d}
\int_{|\xi|=1} \bigg(\sum_{j=1}^d z_j\xi_j^2\bigg)^2 d\sigma(\xi) \geq 0.
$$
On the other hand, if 
$$
\int_{|\xi|=1} \bigg(\sum_{j=1}^d z_j\xi_j^2\bigg)^2 d\sigma(\xi) = 0,
$$
then by continuity it must be
$$
\sum_{j=1}^d z_j\xi_j^2 =0 \quad \text{for all } \xi\in \R^d, |\xi|=1.
$$
Choosing $\xi = {e}_i$, where $e_i$ is the $i$-th coordinate vector, and varying $i=1,\dots,d$, we conclude that $z=0$, and hence that the matrix $\frac{\partial{G}_i}{\partial a_j}((d-2)^{\frac1d},0)$ is positive definite, and invertible. 
By the Implicit Function Theorem in Banach spaces (see, e.g., \cite[statement (10.2.1)]{D}) we can then conclude that there exists a unique solution $(a_1(\kappa),\dots, a_d(\kappa))$ with $a_i$ close to $(d-2)^{\frac1d}$, for $\kappa$ close to zero in the $C^2$-norm on $|\xi|=1$.

\subsection{The second Euler-Lagrange condition} Let $E^\kappa=E(a_1(\kappa),\dots,a_d(\kappa))$ be the unique solution of the Euler-Lagrange condition \eqref{E1d} found in the previous section. Here we prove that 
$$
P^\kappa(x) \geq C^\kappa \quad \text{for } x \in \R^d\setminus E^\kappa,
$$
where the potential $P^\kappa$ is defined as 
$$
P^\kappa(x):= \left(\left(\frac{1}{|x|^{d-2}}+\kappa(x)\right) \star \frac{\chi_{E^\kappa}}{|E^\kappa|} \right) (x)+ \frac{1}{2}|x|^2, \quad x
\in \R^d,
$$
and $P^\kappa(x)=C^\kappa$ in $E^\kappa$ by the first Euler-Lagrange condition.

The proof of the subharmonicity of the potential proceeds exactly as in the two-dimensional case treated in Section~\ref{sect:subh}, once it is shown the higher-dimensional equivalent of Lemma~\ref{lapposk}. This follows directly by the fact that for every $x \in \partial E^\kappa$ the limit of $\Delta P^\kappa(y)$ as $y\to x$, for $y\notin E^\kappa$, exists and satisfies the lower bound 
\begin{equation}\label{last}
\lim_{ E^\kappa \not \ni y \to x} \Delta P^\kappa(y) \ge  \frac d2, 
\end{equation}
provided $\kappa$ is small enough. To see this, note that for $y\notin E^\kappa$, 
$$
\Delta P^\kappa(y) = \left(\Delta \kappa \star \frac{\chi_{E^\kappa}}{|E^\kappa|} \right) (y)+ d,
$$
and hence by Remark~\ref{Rem:CZ3}, provided $\kappa$ is suitably small on the unit sphere, we can ensure that \eqref{last} is satisfied. Indeed, we can estimate the convolution with $\Delta \kappa$ with $\|\Delta \kappa\|_{\rm CZ}$, and by the definition  \eqref{CT}, $\|\Delta \kappa\|_{\rm CZ}$ can be controlled in terms of the second and third derivatives of $\kappa$ on the unit sphere. 

As for the approximation argument in Section~\ref{sect:approx}, we only need to ensure that the potentials are `close' outside a neighbourhood $B(0,(d-2)^{\frac1d}+\gamma)$ of $B(0,(d-2)^{\frac1d})$, namely of the minimiser of the functional $I^0$ with $\kappa=0$. We hence need the higher-dimensional version of Lemma~\ref{wconv}. Note that in this case the potential $\kappa$ is not bounded in $\R^d$, and hence the proof requires some modification. However, since by homogeneity, $|\kappa(x)| \leq |x|^{2-d}  \sup_{|\xi|=1} |\kappa(\xi)|$, which is small if $x\notin B(0,R)$, with $R$ large, we can reduce to proving convergence in $B(0,R)\setminus B(0,(d-2)^{\frac1d}+\gamma)$, where all potentials are bounded.


\bigskip\bigskip

\noindent
\textbf{Acknowledgements.}
JM and JV acknowledge support from the grants 2017-SGR-395 (Generalitat de Catalunya), PID2020-112881GB-I00 and 
Severo Ochoa and Maria de Maeztu CEX2020-001084-M.
MGM acknowledges support by MIUR--PRIN 2017. MGM and LR are members of GNAMPA--INdAM. 
LS acknowledges support by the EPSRC under the grants EP/V00204X/1 and EP/V008897/1.


\bigskip

\end{document}